\documentclass[a4paper,10pt]{article}

\usepackage{amsfonts}
\usepackage{amssymb}
\usepackage{amsmath, amsfonts, amsxtra}
\usepackage{verbatim} %
\usepackage{hyperref}%

\newtheorem{theorem}{Theorem}[section]%
\newtheorem{lemma}[theorem]{Lemma}%
\newtheorem{proposition}[theorem]{Proposition}%
\newtheorem{corollary}[theorem]{Corollary}%
\newtheorem{definition}[theorem]{Definition}%
\newtheorem{remark}[theorem]{Remark}%

\newenvironment{proof}[1][Proof.]{\begin{trivlist}%
\item[\hskip \labelsep {\bfseries #1}]}{\end{trivlist}}%

%very horrible equation
\usepackage{array} %\displaystyle%
\newenvironment{ma}{\begin{array}{>{\displaystyle}r >{\displaystyle}c >{\displaystyle}l}}{\end{array}}%

\newcommand{\N}{{\mathbb N}}
\newcommand{\R}{{\mathbb R}}

\newcommand{\lap}{\triangle}
\newcommand{\abs}[1]{{\left \vert #1 \right \vert}}%

\newcommand{\qed}{\hfill \ensuremath{\Box}}%

\newcommand{\disk}{D^2}
\newcommand{\strich}{{'}}

%Mittelwertintegral
\def\Xint#1{\mathchoice%
   {\XXint\displaystyle\textstyle{#1}}%%
   {\XXint\textstyle\scriptstyle{#1}}%%
   {\XXint\scriptstyle\scriptscriptstyle{#1}}%%
   {\XXint\scriptscriptstyle\scriptscriptstyle{#1}}%%
   \!\int}%
\def\XXint#1#2#3{{\setbox0=\hbox{$#1{#2#3}{\int}$}%
     \vcenter{\hbox{$#2#3$}}\kern-.5\wd0}}%
\def\mvint{\Xint-}%

\newcommand{\constant}[1][]{C_{#1}}
\newcommand{\eps}[1][]{\varepsilon_{#1}}

%ToC-Tiefe
\setcounter{secnumdepth}{4}%
%Equation-Counter%
\numberwithin{equation}{section}%

%opening
\title{Boundary regularity via Uhlenbeck-Rivi\`ere decomposition}
\author{Frank M\"uller\footnote{F. M\"uller: Universit\"at Duisburg-Essen, Fachbereich Mathematik, 47048 Duisburg, Germany (mueller@math.tu-cottbus.de)}, Armin Schikorra\footnote{A. Schikorra: RWTH Aachen, Institut f\"ur Mathematik, 52064 Aachen, Germany (schikorra@instmath.rwth-aachen.de)}}
\date{}

\begin{document}
\maketitle
\begin{abstract}
\noindent We prove that weak solutions of systems with skew-symmetric structure, which possess a continuous boundary trace, have to be continuous up to the boundary. This applies, e.g., to the $H$-surface system $\lap u = 2H(u)\partial_{x^1}u\wedge\partial_{x^2} u$ with bounded $H$ and thus extends an earlier result by P.\,Strzelecki and proves the natural counterpart of a conjecture by E.\,Heinz. Methodically, we use estimates below natural exponents of integrability and a recent decomposition result by T.\,Rivi\`ere.\\[1ex]
{\bf Keywords:} boundary regularity, systems with skew-symmetric structure, $H$-surface system, nonlinear decomposition\\[1ex]
{\bf AMS Classification:} 35J45, 35B65, 53A10 

\end{abstract}

\section{Introduction}
In the present paper we establish regularity up to the boundary for weak solutions of a class of second order equations with continuous boundary trace. Let us start with a typical example: Write $D^2:=\{x=(x^1,x^2)\in\R^2\,:\ |x|<1\}$ for the unit disc in $\R^2$ and consider a weak solution $u\in W^{1,2}(D^2,\R^3)$ of the \emph{$H$-surface system}
\begin{equation}
\label{eq:h-surface}
 -\Delta u=-2H(u)\partial_{x^1}u\wedge\partial_{x^2}u\quad\mbox{in}\ D^2
\end{equation}
with some prescribed function $H\in L^\infty(\R^3)$. Proving a conjecture by E.\,Heinz \cite{Heinz}, T.\,Rivi\`ere \cite{Riviere06} showed that $u$ then has to be continuous in $D^2$; see Rivi\`ere's paper for a list of several earlier attempts in proving Heinz' conjecture. The importance of (\ref{eq:h-surface}) comes from the fact that conformally parametrized solutions of (\ref{eq:h-surface}) form surfaces with prescribed mean curvature $H$ in $\R^3$. 

A natural counterpart of Heinz' conjecture is the following: Assume that $u\in W^{1,2}(D^2,\R^3)$ solves (\ref{eq:h-surface}) and, additionally, that $u|_{\partial D^2}$ is continuous. Can we then prove that $u$ belongs to $C^0(\overline{D^2})$, merely supposing that $H\in L^\infty(\R^3)$? There are several partial answers to this question in the literature, e.g., \cite{BrC84}, \cite{Jakobowsky}, \cite{Strzelecki03}, \cite{Chone}, where additional assumptions on $H$ (constant, Lipschitz, structure conditions)  or $u$ (a priori bounded) were presupposed; see also \cite{Qing}, where boundary regularity for bounded, weakly harmonic maps is proved. We now can settle the posed question completely as a corollary of our main theorem:

\begin{theorem}\label{th:boundreg}
Let $\Omega \in L^2(D^2,so_m\otimes \R^2)$ and $e \in L^s(D^2,\R^m)$, $s>1$, be given. Then, any weak solution $u \in W^{1,2}(D^2,\R^m)$ of
\begin{equation}
\label{eq:sys}
 -\lap u = \Omega \cdot \nabla u + e \mbox{\quad in $D^2$}
 \end{equation}
belongs to $C^{0,\alpha}(D^2,\R^m)$ for some $\alpha > 0$. If the trace $u\big \vert_{\partial D^2}$ is continuous, then we conclude $u \in C^{0,\alpha} (D^2,\R^m) \cap C^0(\overline{D^2},\R^m)$.
\end{theorem}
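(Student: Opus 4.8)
The plan is to follow Rivière's strategy for interior regularity, adapted to the boundary, and to exploit the estimates below the natural exponent of integrability announced in the abstract. The starting point is the Uhlenbeck--Rivière gauge decomposition of $\Omega$: one constructs $A \in W^{1,2} \cap L^\infty(D^2, GL_m)$ and $B \in W^{1,2}(D^2, M_m)$ with $\nabla A - A\Omega = \nabla^\perp B$ and $\operatorname{dist}(A, SO_m)$ small, so that equation \eqref{eq:sys} can be rewritten in conservation form as $\operatorname{div}(A \nabla u + B \nabla^\perp u) = A e$. The first step is therefore to recall or establish this decomposition on the disc and to record the linear estimates (a Hodge/Morrey-type decomposition on $D^2$, the $W^{2,s}$-theory for the Laplacian with $s$ close to $1$, and Adams-type embeddings $W^{1,(2,1)} \hookrightarrow C^0$ together with the duality $\mathcal{H}^1$--$BMO$ used to absorb the Jacobian-type right-hand side $\nabla A \cdot \nabla^\perp u$).

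The second step is the interior Hölder regularity, i.e.\ the first assertion of the theorem: writing $u = u_1 + u_2$ on a small ball, where $u_2$ solves the inhomogeneous problem $-\Delta u_2 = \operatorname{div}(\text{l.o.t.}) + Ae$ with zero boundary data and $u_1$ is harmonic, one obtains a decay estimate for $\int_{B_r} |\nabla u|^2$ of the form $\int_{B_r} |\nabla u|^2 \lesssim (r/R)^{2\mu} \int_{B_R} |\nabla u|^2 + R^{2\delta}$, using smallness of the $L^2$-norm of $\Omega$ on small balls and the subcritical gain coming from $e \in L^s$, $s>1$. A Morrey--Campanato/Dirichlet-growth argument then yields $u \in C^{0,\alpha}_{loc}(D^2)$.

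The third, and genuinely new, step is to carry this through up to $\partial D^2$ under the hypothesis that $u|_{\partial D^2}$ is continuous. After a conformal chart one works on a half-disc $D^2_+$ with a flat boundary piece $\Gamma$. The idea is to compare $u$ on a half-ball $B_r^+(x_0)$, $x_0 \in \Gamma$, with the harmonic extension $h$ of the (continuous, hence on small arcs almost constant) boundary trace, and to split $u - h = v_1 + v_2$: $v_1$ harmonic with the same boundary values as $u-h$ on the flat part and $v_2$ solving the conservation-law equation with zero trace on $\Gamma$. The boundary estimate for $v_2$ requires the gauge $A$, $B$ to be chosen (by an even reflection of $\Omega$, or by solving the gauge equation with natural boundary conditions) so that the conservation form survives up to $\Gamma$, and then the same $\mathcal{H}^1$--$BMO$ duality argument applies on half-balls, with the boundary term controlled because $v_2$ vanishes on $\Gamma$. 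Combining the harmonic decay of $v_1$, the modulus of continuity of the trace feeding into $h$, and the subcritical contribution of $e$, one gets $\int_{B_r^+(x_0)} |\nabla u|^2 \le C (r/R)^{2\mu} \int_{B_R^+(x_0)} |\nabla u|^2 + \omega(R) + C R^{2\delta}$ where $\omega$ is (a power of) the modulus of continuity of $u|_{\partial D^2}$; iterating gives a uniform Dirichlet-growth bound up to the boundary, hence $u \in C^0(\overline{D^2})$ (and Hölder continuity up to the boundary wherever the trace is).

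I expect the main obstacle to be the boundary version of the gauge decomposition and the associated linear estimates: the Uhlenbeck--Rivière construction and the $\mathcal{H}^1$--$BMO$ (Wente/Coifman--Lions--Meyer--Semmes) estimates are tailored to the interior, and one must either reflect the system across $\Gamma$ while preserving the skew-symmetric (conservation-law) structure, or develop half-space analogues of the Hardy-space compensation phenomena with the correct boundary conditions on $A$, $B$, and $v_2$. Everything else — the Campanato iteration, the subcritical bookkeeping of $e \in L^s$, the reduction to a half-disc by a conformal map, and the use of continuity of the trace through the harmonic comparison function $h$ — is standard once that boundary toolkit is in place.
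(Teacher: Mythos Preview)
Your plan for the interior statement is close in spirit to the paper's, though the paper works with the $(P,\xi)$ form of Rivi\`ere's gauge (Lemma~\ref{la:Uhlenbeck}) rather than the $(A,B)$ conservation law, and then performs a linear Hodge decomposition $P^{-1}\nabla u=\nabla f+\nabla^\bot g+h$ on small balls. The Morrey-type decay is obtained for $\int|\nabla u|^p$ with $p<2$ (not $p=2$), via $\mathcal H$--$BMO$ duality exactly as you suggest; this sub-natural exponent is what makes the constants uniform in the paper's argument.

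Where you genuinely diverge from the paper is the boundary step. You propose flattening to a half-disc, building a boundary-compatible gauge (by reflection or by imposing boundary conditions on $A,B$), comparing with the harmonic extension of the trace, and running Campanato on half-balls. The paper does \emph{none} of this. Instead it observes that the interior Dirichlet-growth estimate~\eqref{eq:bound:dirgrowth}, combined with Proposition~\ref{pr:dirichlet}, already yields a modulus of continuity
\[
|u(x)-u(y)|\le C\big(\Vert\nabla u\Vert_{L^2(B_\varrho(a))}+\Vert e\Vert_{L^s(B_\varrho(a))}\big),\qquad x,y\in B_{\varrho/2}(a),\ B_\varrho(a)\subset D^2,
\]
whose right-hand side tends to $0$ \emph{uniformly in $a$} as $\varrho\to0$, simply by absolute continuity of the integrals. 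Continuity up to $\partial D^2$ then follows from Strzelecki's elementary Lemma~\ref{la:bound:strzlecki}: a Courant--Lebesgue style selection of a good radial segment links an interior point to a nearby boundary point, and the continuous trace closes the loop. No boundary gauge, no reflection, no half-space Hardy estimates are needed.

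So your route is not wrong, but it is considerably harder, and the obstacle you yourself flag---the boundary version of the Uhlenbeck--Rivi\`ere decomposition and the half-space compensation estimates---is precisely what the paper circumvents. (Note also that the paper remarks that a reflection argument \emph{does} work if the trace lies in $W^{2,p}$ for some $p>1$; for a merely continuous trace the subtraction of a harmonic extension only produces an extra term $\Omega\cdot\nabla h\in L^1$, not $L^s$ with $s>1$, which is why your route requires more care than it first appears.) The payoff of your approach, if carried out, would presumably be H\"older continuity up to the boundary when the trace is H\"older, whereas the paper's Strzelecki-lemma argument yields only $C^0(\overline{D^2})$; but for the theorem as stated the paper's method is much shorter.
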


In Theorem\,\ref{th:boundreg}, $so_m\otimes \R^2$ denotes the space of skew-symmetric $m\times m$-matrices with entries in $\R^2$, $\nabla=(\partial_{x^1},\partial_{x^2})^t$ is the gradient and $\Omega\cdot\nabla u$ stands for the matrix product with entries given by the scalar product of the respective components of $\Omega$ and $\nabla u$. 

\begin{remark}
The main new contribution in Theorem\,\ref{th:boundreg} is the continuity result up to the Dirichlet type boundary. The interior regularity was proved by T.\,Rivi{\`e}re in \cite{Riviere06} (for $e\equiv0$) and our proof is based on Rivi\`ere's decomposition result combined with the Dirichlet growth approach by Rivi\`ere and Struwe in \cite{StruweRiviere} as well as some additional arguments due to P.\,Strzelecki \cite{Strzelecki03}. 
\end{remark}

\begin{remark}
Let us emphasize that one can prove Theorem\,\ref{th:boundreg} also by reflection across $\partial D^2$, whenever there is some $\psi \in W^{2,p}(D^2,\R^m)$, $p > 1$, such that $u = \psi$ on $\partial D^2$. Indeed, the difference function $v:=u-\psi\in W^{1,2}_0(D^2,\R^m)$ then also solves system (\ref{eq:sys}) with a zero order term $\tilde e \in L^{\tilde s}(D^2,\R^m)$ for some $\tilde s > 1$. Odd reflection of $v$ and appropriate reflection of the data $\Omega,\tilde e$ then yields an analogue system on some larger disc $B_{1+\delta}(0)$, $\delta>0$, and the assertion follows by Rivi\`ere's interior regularity result.
\end{remark}

\begin{remark}
In Theorem\,\ref{th:boundreg} the unit disc $D^2$ can be replaced by any other simply connected domain $\Theta \subset\R^2$ with $C^{1,\beta}$-boundary , $\beta>0$, according to the Riemannian mapping theorem and the well established boundary behaviour of conformal mappings; see for instance \cite{Pommerenke} Chapter 3.
\end{remark}

Returning to Heinz' conjecture mentioned above, we obtain the following

\begin{corollary}
\label{co:h-surface}
Let $H\in L^\infty(\R^3)$ be given and let $u\in W^{1,2}(D^2,\R^3)$ be a solution of (\ref{eq:h-surface}) with continuous boundary trace $u|_{\partial D^2}$. Then there holds $u\in C^{0,\alpha}(D^2)\cap C^0(\overline{D^2})$ for some $\alpha>0$. 
\end{corollary}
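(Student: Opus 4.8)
The corollary is a formal consequence of Theorem \ref{th:boundreg}; the plan is simply to recast the $H$-surface system (\ref{eq:h-surface}) into the skew-symmetric form (\ref{eq:sys}) and then quote the theorem.

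First I would exhibit the skew-symmetric potential, as in Rivi\`ere's treatment of the interior case. Recall that for $u=(u^1,u^2,u^3)$ one has $(\partial_{x^1}u\wedge\partial_{x^2}u)^i=\sum_{j,k=1}^3\varepsilon_{ijk}\,\partial_{x^1}u^j\,\partial_{x^2}u^k$, where $\varepsilon_{ijk}$ is the totally antisymmetric symbol. Set, for $1\le i,j\le 3$,
\[
 \Omega^i_j := -H(u)\sum_{k=1}^3 \varepsilon_{ijk}\,\bigl(\partial_{x^2}u^k,\,-\partial_{x^1}u^k\bigr)\ \in\ \R^2 .
\]
Because $\varepsilon_{jik}=-\varepsilon_{ijk}$ we get $\Omega^j_i=-\Omega^i_j$, i.e. $\Omega$ is skew-symmetric; and since $H\in L^\infty(\R^3)$ and $\nabla u\in L^2(D^2)$, every entry of $\Omega$ lies in $L^2(D^2)$, so $\Omega\in L^2(D^2,so_3\otimes\R^2)$. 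Using $\sum_{j,k}\varepsilon_{ijk}\,\partial_{x^1}u^k\,\partial_{x^2}u^j=-\sum_{j,k}\varepsilon_{ijk}\,\partial_{x^1}u^j\,\partial_{x^2}u^k$, a short computation gives
\[
 \sum_{j=1}^3\Omega^i_j\cdot\nabla u^j \;=\; -2H(u)\,(\partial_{x^1}u\wedge\partial_{x^2}u)^i ,\qquad i=1,2,3 .
\]
Hence $u$ solves (\ref{eq:sys}) with this $\Omega$ and with $e\equiv0$, which trivially belongs to $L^s(D^2,\R^3)$ for every $s>1$. This rewriting is purely pointwise (a.e.) and the right-hand side of (\ref{eq:h-surface}) lies in $L^1(D^2,\R^3)$ (a product of one $L^\infty$ and two $L^2$ factors), so the notion of weak solution is unaffected.

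With $u$ now a $W^{1,2}(D^2,\R^3)$ weak solution of (\ref{eq:sys}) for admissible data $\Omega,e$ and with continuous boundary trace $u|_{\partial D^2}$, Theorem \ref{th:boundreg} yields $u\in C^{0,\alpha}(D^2,\R^3)\cap C^0(\overline{D^2},\R^3)$ for some $\alpha>0$, which is exactly the assertion. Since everything here is deductive, there is no genuine obstacle: the only point deserving care is the algebraic verification of the displayed identity (equivalently, picking the correct skew-symmetric $\Omega$) together with the elementary observation that its entries are square-integrable; after that the statement drops out of the main theorem.
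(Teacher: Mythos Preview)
Your proposal is correct and follows exactly the paper's approach: the paper also deduces the corollary directly from Theorem~\ref{th:boundreg} by writing (\ref{eq:h-surface}) in the form (\ref{eq:sys}) with $e\equiv0$ and the skew-symmetric potential $\Omega^i_j=H(u)\sum_k\varepsilon_{ijk}\nabla^\bot u^k$ (displayed there as an explicit $3\times3$ matrix). Your Levi-Civita formulation is the same matrix, and your verification of $\Omega\cdot\nabla u=-2H(u)\,\partial_{x^1}u\wedge\partial_{x^2}u$ and of $\Omega\in L^2(D^2,so_3\otimes\R^2)$ is precisely what is needed.
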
 

This follows directly from Theorem\,\ref{th:boundreg} by writing (\ref{eq:h-surface}) in the form (\ref{eq:sys}) with
	$$\Omega:=H(u)\begin{pmatrix}
	0 & \nabla^\bot u^3 & -\nabla^\bot u^2\\
	-\nabla^\bot u^3 & 0 & \nabla^\bot u^1\\
	\nabla^\bot u^2 & -\nabla^\bot u^1 & 0
	\end{pmatrix}\in L^2(D^2,so_m\otimes \R^2),$$
where we abbreviated $\nabla^\bot:=(-\partial_{x^2},\partial_{x^1})$ and $u=(u^1,u^2,u^3)$.

Let us emphasize that Theorem\,\ref{th:boundreg} can be applied, more generally, to stationary points of conformally invariant functionals in two dimensions. Having Gr\"uter's \cite{Grueter} characterization in mind, we can give the following geometric description (see e.g.~\cite{Chone} and \cite{Riviere06} for details): Let $\mathcal{N}$ be a smooth manifold embedded into some $\R^m$ such that its second fundamental form $A$ is bounded and let $\omega$ be a $2$-form on $\mathcal N$ of class $C^1$ such that $d\omega$ is bounded on $\mathcal N$. Then any stationary point $u \in W^{1,2}(D^2,\mathcal{N})$ of the (conformally invariant) functional
\begin{equation}
\label{eq:functional} 
 \mathcal F(u)=\int_{D^2}\big[|\nabla u|^2+\omega(u)(\partial_{x^1}u,\partial_{x^2}u)\big]\,dx^1\wedge dx^2
\end{equation}
solves the system 
\begin{equation}
\label{eq:stationary}
 \lap u^i + A_{j,l}^i(u)\nabla u^j \cdot \nabla u^l + \lambda_{j,l}^i(u)\ \partial_{x^1} u^j\ \partial_{x^2} u^l = 0\quad\mbox{in}\ D^2\quad\quad (1 \leq i \leq m).
\end{equation}
Thereby, $\lambda=(\lambda_{j,l}^i)$ is some quantity determind by $d\omega$, which is bounded and skew-symmetric, i.e.~$\lambda_{j,l}^i=-\lambda_{i,l}^j$. The system (\ref{eq:stationary}) can be rewritten into the form \eqref{eq:sys} as presented in \cite{Riviere06}, Theorem I.2. Hence, we proved the following counterpart to a conjecture by S.\,Hilde\-brandt \cite{Hil82}, \cite{Hil83}: 

\begin{corollary}
Under the assumptions just mentioned, any stationary point $u\in W^{1,2}(D^2,\mathcal{N})$ of the functional (\ref{eq:functional}) possessing a continuous boundary trace $u|_{\partial D^2}$ belongs to $C^0(\overline{D^2})$.
\end{corollary}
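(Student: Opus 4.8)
The plan is to deduce the statement directly from Theorem~\ref{th:boundreg} by exhibiting the Euler--Lagrange system of $\mathcal F$ in the skew-symmetric form \eqref{eq:sys}. First I would recall, via Gr\"uter's characterization of conformally invariant variational problems in two dimensions \cite{Grueter} (see also \cite{Chone} and \cite{Riviere06}), that a stationary point $u\in W^{1,2}(D^2,\mathcal N)$ of $\mathcal F$ satisfies the system \eqref{eq:stationary} in the distributional sense, where $A=(A^i_{j,l})$ is the second fundamental form of $\mathcal N\hookrightarrow\R^m$ and $\lambda=(\lambda^i_{j,l})$ is the tensor determined by $d\omega$, which is skew-symmetric in the sense $\lambda^i_{j,l}=-\lambda^j_{i,l}$. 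The hypotheses that $|A|$ is bounded and that $d\omega$ is bounded on $\mathcal N$ guarantee that, along $u$, both $A(u)$ and $\lambda(u)$ lie in $L^\infty(D^2)$.

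Next I would carry out the antisymmetrisation of the first-order nonlinearity. Since $u$ takes values in $\mathcal N$, each $\partial_{x^\alpha}u$ is tangent to $\mathcal N$ along $u$, so that — using the symmetry of $A$ in its lower indices together with the orthogonality $u^\ast(T\mathcal N)\perp u^\ast(N\mathcal N)$ — the term $A^i_{j,l}(u)\nabla u^j\cdot\nabla u^l$ can be rewritten as $\sum_j\widetilde\Omega^i_j\cdot\nabla u^j$ with a matrix $\widetilde\Omega$ that is skew-symmetric in $(i,j)$; similarly the term $\lambda^i_{j,l}(u)\,\partial_{x^1}u^j\,\partial_{x^2}u^l$ contributes a skew-symmetric matrix obtained from $\lambda(u)\nabla^\bot u$. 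This algebraic rearrangement is precisely the content of \cite{Riviere06}, Theorem~I.2, and since it merely rewrites the same distributional identity, $u$ is a weak solution of
\[
 -\lap u=\Omega\cdot\nabla u\qquad\text{in }D^2 ,
\]
with $\Omega=\widetilde\Omega+(\text{$\lambda$-part})\in L^2(D^2,so_m\otimes\R^2)$; here the $L^2$-bound follows from $\|\Omega\|_{L^2(D^2)}\le C\,(\|A\|_{L^\infty}+\|d\omega\|_{L^\infty})\,\|\nabla u\|_{L^2(D^2)}<\infty$. In other words \eqref{eq:sys} holds with $e\equiv0$, which trivially belongs to $L^s(D^2,\R^m)$ for every $s>1$.

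Finally I would invoke Theorem~\ref{th:boundreg}: it gives $u\in C^{0,\alpha}(D^2,\R^m)$ for some $\alpha>0$ and, because by assumption the trace $u\big\vert_{\partial D^2}$ is continuous, the sharper conclusion $u\in C^{0,\alpha}(D^2,\R^m)\cap C^0(\overline{D^2},\R^m)$; in particular $u\in C^0(\overline{D^2})$, as claimed. The only genuinely non-trivial ingredient is the passage from \eqref{eq:stationary} to the skew-symmetric form \eqref{eq:sys}, i.e.\ the production of an $\Omega\in so_m\otimes\R^2$; as this rearrangement is already available in \cite{Riviere06}, the corollary follows with no further analytic work beyond Theorem~\ref{th:boundreg}.
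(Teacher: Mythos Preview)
Your proposal is correct and follows exactly the paper's own argument: the corollary is deduced from Theorem~\ref{th:boundreg} by rewriting the Euler--Lagrange system \eqref{eq:stationary} in the skew-symmetric form \eqref{eq:sys} via Rivi\`ere's Theorem~I.2 in \cite{Riviere06}, with $e\equiv0$. No further work is needed.
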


Let us sketch the \emph{plan of the proof of Theorem\,\ref{th:boundreg}}: 
As already mentioned, we combine ideas by Rivi\`ere, Rivi\`ere-Struwe and Strzelecki. More precisely: We use the technique of Rivi{\`e}re and Struwe in the proof of Theorem 1.1 of \cite{StruweRiviere}, showing that Morrey type estimates hold \textit{below the natural exponent $p=2$} (see Lemma \ref{la:abschfgh}; this idea already appeared in \cite{Strzelecki03}). For this purpose, we apply Rivi\`ere's decomposition result (see Lemma \ref{la:Uhlenbeck} below) and decompose $\Omega\in L^2(D^2,so_m\otimes \R^2)$ such that $\mbox{div}(P^{-1} \nabla u)$ belongs to the Hardy space $\mathcal{H}$ up to a harmless bounded factor; here $P$ denotes an appropriate orthogonal transformation. Then we use that $u$ lies in $BMO$ and the duality between $BMO$ and $\mathcal{H}$ to obtain uniform Morrey type estimates for $p \in (1,2)$. In Section \ref{ssec:harmonicwente} we will recall the definitions of $\mathcal H$ and its dual $BMO$ and we collect some of their properties. 

Once established the Morrey type estimates, we can apply an adapted version of the Dirichlet growth theorem, obtaining an appropriate estimate for the modulus of continuity for $u$ (see Proposition\,\ref{pr:dirichlet} below). Then the desired continuity of a solution $u\in W^{1,2}(D^2,\mathbb R^m)$ of (\ref{eq:sys}) with continuous trace $u|_{\partial D^2}$ results from Lemma\,\ref{la:bound:strzlecki} due to Strzelecki; see Section \ref{strzcont} for a review of its proof.

In \cite{Hajlasz}, Hajlasz, Strzelecki and Zhong proved a similar Morrey-type estimate for a seemingly different system. But by Rivi\`ere's Gauge decomposition our system (\ref{eq:sys}) can be brought into the form of \cite{Hajlasz}; see equation (\ref{eq:system:strz}). Thus it is possible to recover a uniform estimate also by their method.

After finishing the manuscript, T.\,Lamm drew our attention to Rivi\`ere's survey paper \cite{RivOver}, where interior Morrey type estimates were established even for $p=2$, based on Rivi\`ere's conservation law. Note that Rivi\`ere's result can be used to give an alternative proof of Theorem\,\ref{th:boundreg} but no qualitatively better result can be obtained. In addition, the arguments used here and adapted from \cite{StruweRiviere} to establish the Morrey type estimates for $p<2$ seem to be more general in having generalizations to higher dimensional systems. We wish to thank Tobias Lamm for his valuable hint. 

We conclude by fixing some notation for the whole paper: As already mentioned, $\nabla = (\partial_{x^1},\partial_{x^2})^t$ denotes the gradient, while $\nabla^\bot = (-\partial_{x^2}, \partial_{x^1})^t$ denotes the "rotated" gradient. We write $B_r(x)$ for a disc of radius $r>0$ around the center $x\in \R^2$ and $D^2:=B_1(0)$ for the unit disc. We also define the mean value of some function $u$ over $B_r(x)$, 
\[
 (u)_{x,r}:=\mvint_{B_r(x)} u = \frac{1}{\abs{B_{r}(x)}} \int_{B_r(x)} u.
\]
Finally, $so_m\otimes\mathbb F$ and $SO_m\times\mathbb F$ denote the space of $m\times m$-matrices with components in the field $\mathbb F$ and, as usual, we write shortly $so_m:=so_m\otimes\mathbb R$, $SO_m:=SO_m\otimes\mathbb R$.

\vspace{2ex}\noindent
{\bf Acknowledgement.} The second author was partially supported by the RWTH Aachen and Studienstiftung des Deutschen Volkes for which he likes to express his gratitude.

%%%%%%%%%%%%%%%%%%%%%%%%%%%%%%%%%%%%%%%%%%%%%%%%%%%%%%%%%%%%%%%%%%%%%%%%%%%%%%%%%%%%%%%%%%%%%%%%%%%%%%%%%%%%%%

\section{Proof of Theorem \ref{th:boundreg}}
\subsection{Decompositions}\label{decomp}
We intend to apply the following nonlinear decomposition, which is due to Rivi\`ere and adapts Uhlenbeck's technique in \cite{Uhlenbeck}:
\begin{lemma}[Rivi{\`e}re, 2007]
\label{la:Uhlenbeck}(c.f.~\cite{Riviere06}, lemma A.3)\\
There are $\eps[m] \in (0,1)$ and $\constant[m] > 0$ such that for every $a \in \mathbb R^2$ and $r > 0$ and for every $\Omega \in L^2(B_r(a),so_m\otimes \R^2)$ with
\[
 \Vert \Omega \Vert_{L^2(B_r(a))} \leq \eps[m]
\]
we have the following decomposition: There exist $P \in W^{1,2}(B_r(a),SO_m)$ and $\xi \in W^{1,2}(B_r(a),so_m)$ such that
\[
 \nabla^\bot \xi = P^{-1} \nabla P + P^{-1} \Omega P \mbox{\quad in $B_r(a)$}
\]
is true. In addition, there holds the estimate
\[
 \Vert \nabla \xi \Vert_{L^2(B_r(a))} + \Vert \nabla P \Vert_{L^2(B_r(a))} \leq \constant[m]\ \Vert \Omega \Vert_{L^2(B_r(a))}\mbox{.}
\]
The constants $\constant[m]$ and $\eps[m]$ are independent of $r$ and $a$ as can be seen by shifting and scaling.
\end{lemma}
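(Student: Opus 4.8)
The plan is to obtain $P$ as a minimiser of the Coulomb-type energy
\[
F(P):=\int_{B_r(a)}\bigl|P^{-1}\nabla P+P^{-1}\Omega P\bigr|^2\,dx
\]
over the admissible class $W^{1,2}(B_r(a),SO_m)$, and then to extract $\xi$ from the Euler--Lagrange equation of $F$ at the minimiser. Throughout, $\langle\cdot,\cdot\rangle$ and $|\cdot|$ denote the Frobenius product and norm; they are invariant under conjugation by elements of $SO_m$, and $|P^{-1}\nabla P|=|\nabla P|$ pointwise since $P^{T}P=\mathrm{Id}$. First I would establish existence of a minimiser by the direct method: as $P$ is bounded and $\|\nabla P\|_{L^2}\le\|P^{-1}\nabla P+P^{-1}\Omega P\|_{L^2}+\|\Omega\|_{L^2}$, every minimising sequence $(P_k)$ is bounded in $W^{1,2}(B_r(a))$, so along a subsequence $P_k\rightharpoonup P$ in $W^{1,2}$, $P_k\to P$ in $L^2$ and a.e., whence $P\in SO_m$ a.e.

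The one delicate point in the lower semicontinuity concerns the splitting $A_k:=P_k^{-1}\nabla P_k+P_k^{-1}\Omega P_k$: the term $P_k^{-1}\Omega P_k$ converges to $P^{-1}\Omega P$ \emph{strongly} in $L^2$ by dominated convergence, its pointwise norm being $|\Omega|\in L^2$ for every $k$, whereas $P_k^{-1}\nabla P_k=P_k^{T}\nabla P_k$ converges to $P^{-1}\nabla P$ only \emph{weakly} in $L^2$, as one sees by testing against $\phi\in L^2$, writing $\int\langle P_k^{T}\nabla P_k,\phi\rangle=\int\langle\nabla P_k,P_k\phi\rangle$, and combining $P_k\phi\to P\phi$ in $L^2$ with $\nabla P_k\rightharpoonup\nabla P$. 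Hence $A_k\rightharpoonup A:=P^{-1}\nabla P+P^{-1}\Omega P$ in $L^2$ and $F(P)\le\liminf_k F(P_k)$, so $P$ minimises $F$.

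Next I would compute the first variation along the admissible paths $P_\varepsilon:=P\exp(\varepsilon\omega)$, $\omega\in C_c^\infty(B_r(a),so_m)$, which again lie in $W^{1,2}(B_r(a),SO_m)$ and for which differentiation under the integral sign is legitimate ($|A_\varepsilon|,|\partial_\varepsilon A_\varepsilon|\le C(\omega)(|A|+1)\in L^2$ for $|\varepsilon|\le1$). One finds $\partial_\varepsilon A_\varepsilon\big|_{\varepsilon=0}=\nabla\omega+[A,\omega]$, and the key algebraic fact is that $\langle A,[A,\omega]\rangle=\sum_{j}\langle A_j,A_j\omega-\omega A_j\rangle\equiv0$ pointwise, because for skew-symmetric $A_j$ one has $\langle A_j,A_j\omega-\omega A_j\rangle=-\operatorname{tr}(A_j^{2}\omega)+\operatorname{tr}(A_j^{2}\omega)=0$ by cyclicity of the trace. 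Thus $0=\tfrac12\,\partial_\varepsilon F(P_\varepsilon)\big|_{\varepsilon=0}=\int_{B_r(a)}\langle A,\nabla\omega\rangle$ for all such $\omega$, i.e.\ $\operatorname{div}A=0$ in $\mathcal{D}'(B_r(a))$. As $B_r(a)$ is simply connected, the entrywise Poincaré lemma produces $\xi\in W^{1,2}(B_r(a),so_m)$ with $\nabla^\bot\xi=A=P^{-1}\nabla P+P^{-1}\Omega P$ and $|\nabla\xi|=|A|$ pointwise, which is exactly the asserted identity.

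The norm estimate is then immediate from minimality: comparing with the competitor $P\equiv\mathrm{Id}$ gives $\|\nabla\xi\|_{L^2(B_r(a))}^2=\|A\|_{L^2(B_r(a))}^2=F(P)\le F(\mathrm{Id})=\|\Omega\|_{L^2(B_r(a))}^2$, hence also $\|\nabla P\|_{L^2}\le\|A\|_{L^2}+\|\Omega\|_{L^2}\le2\|\Omega\|_{L^2}$; the constants are universal (one may take $\constant[m]=3$, and $\eps[m]$ arbitrary in $(0,1)$), and the stated independence of $r$ and $a$ is built in since the argument runs verbatim on every ball. The step I expect to be the main obstacle is the weak lower semicontinuity — in particular that the nonconvex constraint $P\in SO_m$ passes to the limit and that the \emph{gauge} term $P_k^{-1}\nabla P_k$ converges weakly in $L^2$ — together with justifying the first variation; the pointwise cancellation $\langle A,[A,\omega]\rangle\equiv0$ is what makes the Euler--Lagrange equation linear and is really the heart of the argument. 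I note that this route does not seem to use the smallness $\|\Omega\|_{L^2}\le\eps[m]$ at all; Rivière's proof instead follows Uhlenbeck's continuity method — solve the decomposition for $t\Omega$ with small $t$ via the implicit function theorem, then prove the set of admissible $t\in[0,1]$ is open and closed, the $L^2$-smallness serving to invert the linearised operator and to close the a priori estimate — and either route establishes the lemma.
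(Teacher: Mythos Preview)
Your proof is correct and follows a genuinely different route from the paper's. The paper reproduces Rivi\`ere's original argument via Uhlenbeck's continuity method: one first proves the refined statement (Lemma~A.2 here) for $\Omega\in W^{1,2}$, with the extra normalisations $P-I\in W^{1,2}_0$ and $\int\xi=0$, by showing that the set of $t\in[0,1]$ for which $t\Omega$ admits such a decomposition is nonempty, closed (via the $W^{2,2}$ a~priori bound \eqref{eq:u2:estw22}), and open (via the implicit function theorem applied to $T(U,\lambda)=\mathrm{div}\bigl(e^{-U}\nabla e^{U}+e^{-U}(\nabla^\bot\zeta+\lambda)e^{U}\bigr)$). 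The smallness $\Vert\Omega\Vert_{L^2}\le\eps[m]$ enters twice: to invert the linearised operator $H(\psi)=\Delta\psi+[\nabla\psi,\nabla^\bot\zeta]$ and to absorb the quadratic terms in the a~priori estimate of Proposition~A.5. Lemma~\ref{la:Uhlenbeck} then follows by approximation.

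Your variational argument is more elementary: no implicit function theorem, no Wente inequality, no $W^{2,2}$ bootstrap, and --- as you correctly observe --- no smallness of $\Omega$ is needed. What the continuity method buys in return is additional structure that your minimiser need not have: the boundary condition $P=I$ on $\partial B_r(a)$, higher regularity $P,\xi\in W^{2,2}$ when $\Omega\in W^{1,2}$, and continuous dependence on $\Omega$. For the applications in this paper only the $L^2$ estimate on $\nabla P$ and $\nabla\xi$ is used downstream, so your route suffices here. The two points you flagged as delicate --- weak $L^2$ convergence of $P_k^{-1}\nabla P_k$ via the pairing $\int\langle\nabla P_k,P_k\phi\rangle$ together with dominated convergence for $P_k\phi$, and the pointwise vanishing $\langle A,[A,\omega]\rangle=0$ that kills the commutator term in the first variation --- are exactly the crux, and your treatment of both is sound.
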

For the convenience of the reader we will sketch the proof of Lemma \ref{la:Uhlenbeck} in the Appendix. 

In the following, we will disregard the dependence of constants from the image dimension $m\in\mathbb N$, since $m$ will be fixed for the whole paper. In particular, we write $\varepsilon=\eps[m]$ and $C=\constant[m]$ for the constants determined in Lemma\,\ref{la:Uhlenbeck}.

Now let $\Omega\in L^2(D^2,so_m\otimes\mathbb R^2)$ and $e\in L^s(D^2,\mathbb R^m)$, $s>1$, be given as in Theorem \ref{th:boundreg}. Choose $\delta \in (0,\varepsilon)$ to be fixed later and define $R_0=R_0(\delta)\in (0,1)$ such that 
\begin{equation}
\label{eq:omega}
 (1+C) \Vert \Omega \Vert_{L^2(B_{2R_0}(a) \cap D^2)} \leq \delta \mbox{\quad for all $a \in D^2$}
\end{equation}
is satisfied with the constants $\varepsilon,C$ determined in Lemma \ref{la:Uhlenbeck}. Pick $x_0\in D^2$ and $R>0$ such that $R<\min\{1-|x_0|,R_0\}$ holds true. By extending $\Omega$ formally to $0$ out of $D^2$ and applying Lemma \ref{la:Uhlenbeck}, we then obtain
\begin{equation}
\label{eq:bound:decomp}
 \nabla^\bot \xi = P^{-1} \nabla P + P^{-1} \Omega P \mbox{\quad in $B_{2R_0}(x_0)$}
\end{equation}
and
\begin{equation}
\label{eq:bound:decompest}
\Vert \nabla P \Vert_{L^2(B_{2R_0}(x_0))} + \Vert \nabla \xi \Vert_{L^2(B_{2R_0}(x_0))} \leq \delta\mbox{.}
\end{equation}
Now consider a weak solution $u\in W^{1,2}(D^2,\mathbb R^m)$ of 
\[
  -\lap u=\Omega\cdot\nabla u+e\quad\mbox{in}\ D^2
\]
as in Theorem \ref{th:boundreg}. Formula (\ref{eq:bound:decomp}) then yields
\begin{equation}
\label{eq:system:strz}
 - \mbox{div}(P^{-1} \nabla u) = \nabla^\bot \xi \cdot P^{-1} \nabla u + P^{-1}e\quad\mbox{weakly in}\ B_{2R_0}(x_0) \cap D^2.
\end{equation}
Next, let $x_1 \in D^2$ and $\varrho>$ be chosen with $B_{2\varrho}(x_1) \subset B_{R}(x_0)$. Then, a linear Hodge-decomposition\footnote{For arbitrary $\chi\in L^2(D^2,\R^m\otimes\R^2)$, define $f,g\in W^{1,2}_0(B_\varrho(x_1),\R^m)$ as weak solutions of $\lap f=\mbox{div}(\chi)$ and $\lap g=\mbox{curl}(\chi)$. Then $h:=\chi-\nabla f-\nabla^\bot g$ is harmonic in $D^2$.}
 gives us
\begin{equation}
\label{eq:bound:hdec}
 P^{-1} \nabla u = \nabla f + \nabla^\bot g + h \mbox{\quad in $B_\varrho(x_1)$}
\end{equation}
with functions $f,g \in W^{1,2}_0(B_\varrho(x_1),\R^m)$ and a harmonic $h \in L^2(B_\varrho(x_1),\R^m \otimes \R^2)\cap C^\infty(B_\varrho(x_1),\R^m \otimes \R^2)$. In addition, we have
\begin{equation}
\label{eq:bound:lfg}
\begin{split} - \lap f &= -\mbox{div}(P^{-1}\nabla u)=\nabla^\bot \xi\cdot P^{-1} \nabla u  + P^{-1} e \mbox{\quad in $B_\varrho(x_1)$,}\\
- \lap g &= -\mbox{curl}(P^{-1}\nabla u)=\nabla^\bot P^{-1}\cdot \nabla u \mbox{\quad in $B_\varrho(x_1)$.}
\end{split}
\end{equation}
For $r \in (0,\varrho)$ and $p \in (1,2)$ we now can estimate
\begin{equation}
\label{eq:bound:ve1}
 \begin{ma}
  \int_{B_r(x_1)} \abs{\nabla u}^p\ dx &=& \int_{B_r(x_1)} \abs{P^{-1}\nabla u}^p\ dx \\ 
&\overset{\eqref{eq:bound:hdec}}{\leq}& \constant[p] \int_{B_r(x_1)} \big(\abs{h}^p + \abs{\nabla f}^p + \abs{\nabla g}^p\big).
 \end{ma}
\end{equation}
Consequently, Morrey type $L^p$-estimates for $f,g,h$ will yield such estimates for the solution $u$.

%%%%%%%%%%%%%%%%%%%%%%%%%%%%%%%%%%%%%%%%%%%%%%%%%%%%%%%%%%%%%%%%%%%%%%%%%%%%%%%%%%%%%%%%%%%%%%%%%%%%%%%%%%%%%%%%%%
\subsection{Morrey type estimates}\label{morrey}
Pick $x_1\in D^2$ and $\varrho>0$ with $B_{2\varrho}(x_1)\subset D^2$. Define
\begin{equation}
\label{eq:bound:vrho}
 v_\varrho := \eta (u - (u)_{x_1,\varrho})
\end{equation}
for the $u\in W^{1,2}(D^2,\mathbb R^m)$ from (\ref{eq:bound:lfg}), where $\eta \in C_0^\infty(B_{\frac{3}{2} \varrho}(x_1))$ is some cut-off function satisfying $0 \leq \eta \leq 1$, $\eta \equiv 1$ on $B_\varrho(x_1)$ and $\abs{\nabla \eta} \leq \frac{C}{\varrho}$. Then we have the following crucial estimates:
\begin{lemma}
\label{la:abschfgh}
Let $f,g\in W^{1,2}_0(B_\varrho(x_1),\R^m)$ be solutions of (\ref{eq:bound:lfg}) for some given $P \in W^{1,2}(B_\varrho(x_1),SO_m)$, $\xi \in W^{1,2}(B_\varrho(x_1),so_m)$ satisfying (\ref{eq:bound:decompest}) with small $\delta>0$, as well as some $u\in W^{1,2}(\Omega,\R^m)$ and $e\in L^s(B_1(x_1),\R^m)$ with $s>1$. Then, for every $p \in (1,2)$, there is a constant $C \equiv \constant[p,s]$ such that
\begin{equation}
 \label{eq:bound:nfgp}
 \Vert \nabla f \Vert_{L^p(B_\varrho(x_1))} + \Vert \nabla g \Vert_{L^p(B_\varrho(x_1))} \leq C\ \varrho^{\frac{2}{p} -1}  \delta [v_\varrho]_{BMO} + C\ \varrho^{1 + \frac{2}{p}-\frac{2}{s}} \Vert e \Vert_{L^s(B_{\varrho}(x_1))}.
\end{equation}
Furthermore, for any harmonic $h\in L^p(B_\varrho(x_1))$ and any $r\in(0,\varrho]$, there holds
\[
 \int_{B_r(x_1)}\abs{h}^p \leq \constant[p] \left ( \frac{r}{\varrho} \right )^2 \int_{B_\varrho(x_1)} \abs{h}^p.
\]
\end{lemma}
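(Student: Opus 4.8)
\medskip

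\noindent\textbf{The harmonic estimate.} The plan is to dispatch the estimate for $h$ first, since it is the standard mean-value-type bound. For a harmonic function $h$ on $B_\varrho(x_1)$, each component of $h$ is harmonic, hence $|h|^p$ is subharmonic for $p\ge 1$ (composition of a convex increasing function with $|\cdot|$ applied to a harmonic vector). Therefore $r\mapsto \mvint_{B_r(x_1)}|h|^p$ is monotone nondecreasing in $r$, which gives $\mvint_{B_r(x_1)}|h|^p \le \mvint_{B_\varrho(x_1)}|h|^p$; multiplying by $|B_r| = \pi r^2$ and $|B_\varrho|^{-1}\cdot|B_\varrho|$ yields exactly $\int_{B_r}|h|^p \le (r/\varrho)^2 \int_{B_\varrho}|h|^p$, with $\constant[p]$ in fact equal to $1$ (or absorbed if one prefers to go through the sup-bound $\sup_{B_{\varrho/2}}|h|^p \le C\varrho^{-2}\int_{B_\varrho}|h|^p$ and then split $r\le\varrho/2$ versus $\varrho/2\le r\le\varrho$).

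\medskip

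\noindent\textbf{The estimate for $\nabla f$ and $\nabla g$: setup.} For $f$ and $g$ the idea is to use Riesz-potential / singular-integral representations below the natural exponent, exploiting that the right-hand sides of \eqref{eq:bound:lfg} have Jacobian-type (div-curl) structure so that one can pair with $BMO$. Since $f\in W^{1,2}_0(B_\varrho(x_1))$ solves $-\lap f = \nabla^\bot\xi\cdot P^{-1}\nabla u + P^{-1}e$, I would write $\nabla f$ via the gradient of the Green operator on the disc and split into the two source terms. For the $\nabla^\bot\xi\cdot P^{-1}\nabla u$ term, the plan is to replace $P^{-1}\nabla u$ using the Hodge decomposition \eqref{eq:bound:hdec} — but more cleanly, one writes $P^{-1}\nabla u = \nabla(\text{something}) + \dots$; actually the cleaner route, following Rivi\`ere–Struwe, is to observe $\nabla^\bot\xi\cdot\nabla(\cdot)$ is a Jacobian determinant structure: $\nabla^\bot\xi\cdot\nabla w = \partial_{x^1}\xi\,\partial_{x^2}w - \partial_{x^2}\xi\,\partial_{x^1}w$, a null Lagrangian, so that the corresponding Riesz potential maps into a space one integrability degree better. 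Concretely, since $u-(u)_{x_1,\varrho}$ agrees with $v_\varrho$ on $B_\varrho(x_1)$ and $v_\varrho$ has controlled $BMO$ seminorm, I want to rewrite $\nabla^\bot\xi\cdot P^{-1}\nabla u$ in terms of $v_\varrho$, integrate by parts to move a derivative from $u$ onto the other factors, and land on a commutator/Jacobian expression to which the duality $\mathcal H$–$BMO$ (recalled in Section \ref{ssec:harmonicwente}) applies. This produces the factor $\delta\,[v_\varrho]_{BMO}$: $\delta$ comes from $\|\nabla\xi\|_{L^2}+\|\nabla P\|_{L^2}\le\delta$ via \eqref{eq:bound:decompest}, and the scaling factor $\varrho^{2/p-1}$ is forced by dimensional analysis (a Jacobian term lives in $\mathcal H\subset L^1$, and converting an $L^1$ Morrey bound on $B_\varrho$ to an $L^p$ bound costs $\varrho^{2/p-2}$, while one derivative of the Green kernel against an $L^1$ density on a $\varrho$-disc in $2$D is scale-invariant — carefully tracking gives $\varrho^{2/p-1}$).

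\medskip

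\noindent\textbf{The estimate for $\nabla f,\nabla g$: the two terms.} For the lower-order term $P^{-1}e$ with $e\in L^s$, $s>1$: the gradient of the Green operator is (up to $L^1_{loc}$ kernel) a Riesz potential $I_1$ on the disc, so by the Hardy–Littlewood–Sobolev / Calder\'on–Zygmund estimate on $B_\varrho$ one gets $\|\nabla f\|_{L^q(B_\varrho)}\lesssim \varrho^{?}\|e\|_{L^s(B_\varrho)}$ for suitable $q$; then H\"older on $B_\varrho$ downgrades $q$ to $p$ at the cost of a power of $\varrho$, and bookkeeping of all the $\varrho$-powers (Sobolev gain $+1$, the H\"older loss, the $L^s\to$ normalization) produces exactly $\varrho^{1+2/p-2/s}$. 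For $g$, solving $-\lap g = \nabla^\bot P^{-1}\cdot\nabla u$, the same Jacobian trick applies — here $\nabla^\bot(P^{-1})\cdot\nabla u$ is again a sum of $2\times2$ Jacobians in the entries of $P^{-1}$ and of $u$ — so after writing $u = (u)_{x_1,\varrho} + v_\varrho$ on $B_\varrho$ (the constant drops under $\nabla$), one gets the $\mathcal H$ bound $\|\nabla^\bot P^{-1}\cdot\nabla u\|_{\mathcal H}\lesssim \|\nabla P^{-1}\|_{L^2}[v_\varrho]_{BMO}\le\delta[v_\varrho]_{BMO}$ (using $|\nabla P^{-1}|=|\nabla P|$ since $P\in SO_m$), and the $g$-contribution to \eqref{eq:bound:nfgp} is of the same form $\varrho^{2/p-1}\delta[v_\varrho]_{BMO}$ with no $e$-term. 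Summing the $f$ and $g$ contributions gives \eqref{eq:bound:nfgp}.

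\medskip

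\noindent\textbf{Main obstacle.} The delicate point is the $\mathcal H$–$BMO$ pairing step for the Jacobian term in $\nabla f$: one must genuinely put the product $\nabla^\bot\xi\cdot P^{-1}\nabla u$ (or rather the relevant rewritten bilinear expression) into the Hardy space $\mathcal H^1$ — this requires the precompensated/null-Lagrangian structure to survive the presence of the orthogonal factor $P^{-1}$ and of the cutoff $\eta$ hidden in $v_\varrho$, and it requires care because $u$ itself is only $W^{1,2}$, so $\nabla u\in L^2$ only; the improvement to $\mathcal H^1$ uses precisely that the other factor $\nabla\xi$ (or $\nabla P^{-1}$) is also in $L^2$ and the bilinear form is a Jacobian (Coifman–Lions–Meyer–Semmes / Wente-type estimate). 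Getting the $\delta$-smallness to factor out cleanly — rather than a non-small constant times $\|\nabla\xi\|_{L^2}$ — and getting the $\varrho$-powers right under the localization by $\eta$ is where the bulk of the technical work lies; everything else is Calder\'on–Zygmund and H\"older bookkeeping on the fixed disc $B_\varrho(x_1)$.
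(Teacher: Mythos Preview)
Your harmonic estimate via subharmonicity of $|h|^p$ is correct and in fact cleaner than the paper's route through $W^{2,p}\hookrightarrow L^\infty$ and interior $L^p$-theory.

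For $f$ and $g$ there is a genuine gap. The Hardy bound you assert for $g$,
\[
\|\nabla^\bot P^{-1}\cdot\nabla u\|_{\mathcal H}\le C\, \|\nabla P^{-1}\|_{L^2}\,[v_\varrho]_{BMO},
\]
is false: the Coifman--Lions--Meyer--Semmes theorem gives only $\|\nabla^\bot P^{-1}\cdot\nabla u\|_{\mathcal H}\le C\, \|\nabla P^{-1}\|_{L^2}\|\nabla u\|_{L^2}$, and no Hardy-space estimate replaces $\|\nabla u\|_{L^2}$ by $[u]_{BMO}$ (the two sides do not even scale the same way in $u$). Since the whole point of the lemma is to land on $[v_\varrho]_{BMO}$ rather than $\|\nabla u\|_{L^2(B_\varrho)}$---so that Proposition~\ref{pr:vrBMO} can later close the Morrey iteration---this is the crux of the argument, not a detail.

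The paper does not put the PDE source into $\mathcal H$ and push it through the Green operator. It dualizes: $\|\nabla f\|_{L^p}\le C\sup_\varphi\int\nabla f\cdot\nabla\varphi$ over $\varphi\in C_0^\infty(B_\varrho)$ with $\|\varphi\|_{W^{1,q}}\le1$, $q=p/(p-1)>2$. Testing the equation against such $\varphi$ and integrating by parts moves the derivative off $u$ and produces $\int_{B_\varrho}\nabla^\bot\xi\cdot\nabla(P^{-1}\varphi)\,v_\varrho$. The Hardy element is $\nabla^\bot\xi\cdot\nabla(P^{-1}\varphi)$---built from $\xi$ and the \emph{test function}, not from $u$---while $v_\varrho$ is the BMO factor. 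CLMS then gives $\|\nabla^\bot\xi\cdot\nabla(P^{-1}\varphi)\|_{\mathcal H}\le C\,\delta\,(\delta\|\varphi\|_{L^\infty}+\|\nabla\varphi\|_{L^2})$, and because $q>2$, Morrey's embedding yields $\|\varphi\|_{L^\infty}\le C\,\varrho^{2/p-1}\|\nabla\varphi\|_{L^q}$; this is where the factor $\varrho^{2/p-1}$ actually comes from. The $g$-estimate is identical with $P^{-1}$ in place of $\xi$. Your Green-function framing can be salvaged only by reintroducing a test function (pair against $L^q$ and Hodge-decompose), at which point it becomes the paper's argument.
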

\begin{proof}
Fix a number $p \in (1,2)$ and let $q = \frac{p}{p-1} > 2$ be the conjugated exponent of $p$. According to $f\in W^{1,2}_0(B_\varrho(x_1),\mathbb R^m)$ we then infer\footnote{as $F := \frac{\nabla f \abs{\nabla f}^{p-2}}{\Vert \nabla f \Vert^{p-1}_{L^p}} \in L^q(B_\varrho(0))$ and $\Vert F \Vert_{L^q} = 1$ and we can decompose every $q$-integrable $\tilde{F} = \nabla \varphi + G$, where $div(G) = 0$ and $\varphi \in W^{1,q}_0$. Furthermore we have $\Vert \nabla \varphi \Vert_{L^q} \leq C \Vert f \Vert_{L^q}$; see the proof of Wente's inequality, Theorem \ref{th:wente}, in Subsection \ref{ssec:harmonicwente} for more details.}
\[
 \Vert \nabla f \Vert_{L^p(B_\varrho(x_1))} \leq C \sup_{\varphi \in C_0^\infty(B_\varrho(x_1))\atop \Vert \varphi \Vert_{W^{1,q}} \leq 1} \int_{B_\varrho(x_1)} \nabla f \cdot \nabla \varphi.
\]
%The constant is independent of $\varrho$, $x_0$ and $f$.\\
As $q > 2$ we have the embedding $W^{1,q}(B_\varrho(x_1)) \hookrightarrow C^{1-\frac{2}{q}}(\overline{B_\varrho(x_1)})$. Hence, for every $x,y \in B_\varrho(x_1)$, $x \neq y$, and every $\varphi \in C_0^\infty(B_\varrho(x_1))$, we get
\begin{equation}
\label{eq:bound:absvp}
 \begin{split} 
  \abs{\varphi(x)} &\leq \abs{\varphi(x) - \varphi(y)} + \abs{\varphi(y)}\\
&\leq \frac{\abs{\varphi(x)-\varphi(y)}}{\abs{x-y}^{1-\frac{2}{q}}} \varrho^{1-\frac{2}{q}} + \abs{\varphi(y)}\\
&\leq \varrho^{1-\frac{2}{q}}\,\mbox{Hol}_{1-\frac{2}{q},B_\varrho(x_1)} \varphi + \abs{\varphi(y)}\\
&\leq \varrho^{1-\frac{2}{q}}\ \constant[q]\ \Vert \nabla \varphi \Vert_{L^q(B_\varrho(x_1))} + \abs{\varphi(y)}. 
\end{split}
\end{equation}
Here we used
\[
 \begin{split}
  \mbox{Hol}_{1-\frac{2}{q},B_\varrho(x_1)} \varphi &:=\sup_{x,y \in B_\varrho(x_1)} \frac{\varphi(x)- \varphi(y)}{\abs{x-y}^{1-\frac{2}{q}}}\\
&= \varrho^{\frac{2}{q}-1} \sup_{\tilde{x},\tilde{y} \in B_1(0)} \frac{\varphi(\varrho \tilde{x}+x_1)-\varphi(\varrho \tilde{y}+x_1)}{\abs{\tilde{x}-\tilde{y}}^{1-\frac{2}{q}}}\\
&\leq \varrho^{\frac{2}{q}-1}\ \constant[q] \Vert \nabla \varphi(\varrho (\cdot)+x_1) \Vert_{L^q(B_1(0))}\\[1ex]
&=\constant[q] \Vert \nabla \varphi \Vert_{L^q(B_\varrho(x_1))}.
 \end{split}
\]
As $\varphi \in C_0^\infty(B_\varrho(x_1))$, taking the infimum over all $y \in B_\varrho(x_1)$ on the right hand side of \eqref{eq:bound:absvp}, we obtain (remember that $q$ is the H\"older-conjugate of $p$)
\begin{equation}
\label{eq:bound:sr30}
 \Vert \varphi \Vert_{L^\infty(B_\varrho(x_1))} \leq \constant[p]\ \varrho^{\frac{2}{p} - 1}\ \Vert \nabla \varphi \Vert_{L^q(B_\varrho(x_1))}.
\end{equation}
Furthermore, by H\"older's inequality we have
\begin{equation}
\label{eq:bound:nphi}
 \Vert \nabla \varphi \Vert_{L^2(B_\varrho(x_1))} \leq \constant[p]\ \varrho^{\frac{2}{p}-1} \Vert \nabla \varphi \Vert_{L^q(B_\varrho(x_1))}.
\end{equation}
Now, our $\varphi \in C_0^\infty(B_\varrho(x_1))$ with $\Vert \varphi \Vert_{W^{1,q}} \leq 1$ is an admissible testfunction for the equation of $f$ in \eqref{eq:bound:lfg} and we compute\footnote{\label{ft:bound:nfnp}The following calculations require some care: They work if we assume $\xi$ to be smooth. If this is not the case the integrals
\[
 \int \nabla^\bot \xi\cdot \nabla (P^{-1} u \varphi),\quad \int \nabla^\bot \xi\cdot \nabla (P^{-1} \varphi)\ u
\]
are a priori not well-defined. Indeed, $(\nabla P^{-1}) u$ is not necessarily in $L^2$ and integration by parts could fail. To overcome this, we approximate $\xi$ by smooth $\xi_k$; then the calculations are the same where $\xi$ is replaced by $\xi_k$ - up to an additional term
\[
 \int_{B_\varrho(x_1)} \nabla^\bot (\xi_k - \xi) P^{-1}\ \nabla \varphi\ u \xrightarrow{k \to \infty} 0.
\]}
\[
\begin{ma}
 \int_{B_\varrho(x_1)}\!\!\! \nabla f \cdot \nabla \varphi &\overset{\eqref{eq:bound:lfg}}{=}& \int_{B_\varrho(x_1)} \nabla^\bot \xi \cdot P^{-1} \nabla u\ \varphi + \int_{B_\varrho(x_1)} P^{-1} e \varphi\\
&=&  \!-\int_{B_\varrho(x_1)} \nabla^\bot \xi\cdot \nabla (P^{-1}\varphi) u + \int_{B_\varrho(x_1)} P^{-1} e \varphi\\
&=&  \!-\int_{B_\varrho(x_1)}\!\! \nabla^\bot \xi\cdot \nabla (P^{-1}\varphi) (u-(u)_{x_1,\varrho}) + \int_{B_\varrho(x_1)}\!\! P^{-1} e \varphi.
\end{ma}
\]
Because of $\varphi \in C_0^\infty(B_\varrho(x_1))$, we can smuggle our $\eta$ from \eqref{eq:bound:vrho} into this equation. The duality of Hardy- and BMO-space, Theorem \ref{th:bmoh}, then implies
\[
\begin{ma}
 \int_{B_\varrho(x_1)} \!\!\!\nabla f \cdot \nabla \varphi  &=& -\int_{B_\varrho(x_1)}\!\nabla^\bot \xi\cdot \nabla (P^{-1}\varphi)\ \eta (u-(u)_{x_1,\varrho}) + \int_{B_\varrho(x_1)}\!\! P^{-1} e \varphi\\
&\leq& C\Vert \nabla^\bot \xi \cdot \nabla (P^{-1} \varphi)\Vert_{\mathcal {H}^1}\ \left [ \eta (u - (u)_{x_1,\varrho}) \right ]_{BMO} \\
&& +  \Vert \varphi\  e \Vert_{L^1(B_\varrho(x_1))}.
\end{ma}
\]
By H\"older's inequality we know
\[
 \Vert e \Vert_{L^1(B_\varrho(x_1))} \leq \constant[s]\ \Vert e \Vert_{L^s(B_{\varrho}(x_1))}\  \varrho^{2-\frac{2}{s}},
\]
and, consequently,
\[
 \Vert \varphi e \Vert_{L^1(B_\varrho(x_1))} \overset{\eqref{eq:bound:sr30}}\leq \constant[s,p]  \Vert \nabla \varphi \Vert_{L^q(B_\varrho(x_1))}\ \varrho^{1+\frac{2}{p} - \frac{2}{s}} \Vert e \Vert_{L^s(B_{\varrho}(x_1))}.
\]
Next, we consider the Hardy-term: By extending\footnote{By using an extension of some $f \in W^{1,2}(\disk)$ and scaling the inequality
\[
 \Vert \nabla f \Vert_{L^2(\R^2)} \leq C \Vert f - (f)_{0,1} \Vert_{W^{1,2}(D^2)} \leq C \Vert \nabla f \Vert_{L^2(D^2)}
\]
we see, that the norm of the extension-operator for balls is independent of the radius.} $\xi-(\xi)_{x_1,\varrho}$ and $P^{-1} \varphi-(P^{-1} \varphi)_{x_1,\varrho}$ to $W^{1,2}(\R^2,\R^{n\times n})$-mappings, we deduce from Theorem \ref{th:CLMS}  (remember that $P,\ P^{-1}$ have their values in $SO_m$ a.e.~and thus are bounded):
\[
\begin{ma} 
\hspace*{-2ex}\Vert \nabla^\bot \xi \cdot \nabla (P^{-1} \varphi)\Vert_{\mathcal {H}^1} &\hspace*{-1.8ex}\leq&\hspace*{-1.8ex} C \Vert \nabla (\xi-(\xi)_{x_1,\varrho}) \Vert_{L^2(\R^2)} \Vert \nabla (P^{-1} \varphi-(P^{-1} \varphi)_{x_1,\varrho}) \Vert_{L^2(\R^2)}\\
&\hspace*{-3ex}\leq\hspace*{-3ex}& \,C \Vert \nabla \xi \Vert_{L^2(B_{\varrho}(x_1))}\ \Vert \nabla (P^{-1} \varphi) \Vert_{L^2(B_\varrho(x_1))}\\
&\hspace*{-3ex}\overset{\eqref{eq:bound:decompest}}{\leq}\hspace*{-3ex}& \,C\, \delta (\delta \Vert \varphi \Vert_{L^\infty(B_\varrho(x_1))} + \Vert \nabla \varphi \Vert_{L^2(B_\varrho(x_1))} )\\
&\hspace*{-3ex}\overset{\eqref{eq:bound:sr30}\atop \eqref{eq:bound:nphi}}{\leq}\hspace*{-3ex} &  \,C\, \varrho^{\frac{2}{p}-1} \delta \Vert \nabla \varphi \Vert_{L^q(B_\varrho(x_1))}.
\end{ma}
\]
Putting everything together and using $\Vert \varphi \Vert_{W^{1,q}} \leq 1$ as well as the definition (\ref{eq:bound:vrho}) of $v_\varrho$, we proved the estimate for $f$ in \eqref{eq:bound:nfgp}.

For $g$ we calculate\footnote{Here we have the same problem as for the calculations of $f$, which we solve by approximation of $P$, cf. footnote \ref{ft:bound:nfnp}, page \pageref{ft:bound:nfnp}.}
\[
\begin{ma}
 \int_{B_\varrho(x_1)} \nabla g \cdot \nabla \varphi &\overset{\eqref{eq:bound:lfg}}=& \int_{B_\varrho(x_1)} \nabla^\bot 
P^{-1}\cdot \nabla u\ \varphi\\
&=& \int_{B_\varrho(x_1)} \nabla^\bot P^{-1}\cdot \nabla \varphi\ [\eta(u - (u)_{x_0,\varrho})]\\
&\leq& \Vert \nabla^\bot P^{-1} \cdot \nabla \varphi \Vert_{\mathcal{H}}\ [v_\varrho]_{BMO}\\
&\leq& C \Vert \nabla P^{-1} \Vert_{L^2(B_\varrho(x_1))}\ \Vert \nabla \varphi \Vert_{L^2(B_\varrho(x_1))}\ [v_\varrho]_{BMO}\\
&\overset{\eqref{eq:bound:nphi}}{\leq}&C \Vert \nabla P^{-1} \Vert_{L^2(B_\varrho(x_1))}\ \varrho^{\frac{2}{p}-1} \Vert \nabla \varphi \Vert_{L^q(B_\varrho(x_1))}\ [v_\varrho]_{BMO}\\
&\leq&C \Vert \nabla P^{-1} \Vert_{L^2(B_\varrho(x_1))}\ \varrho^{\frac{2}{p}-1}\ [v_\varrho]_{BMO}.\\
\end{ma}
\]
The constant in the Hardy-estimate is independent of $\varrho$, as one can see by scaling. Hence, we also obtained the estimate for $g$ in \eqref{eq:bound:nfgp}.

Finally, we establish the estimate for the harmonic term $h$. Estimating as in Theorem\,2.1 on p.~78 of Giaquinta's monograph \cite{GiaquintaMultipleIntegrals} and applying the embedding $W^{2,p}\hookrightarrow L^\infty$ as well as $L^p$-theory, we find: For any $h \in L^p(B_1(0))$, $p > 1$, with $\lap h = 0$ in $B_1(0)$ and for any $\gamma\in(0,\frac12)$ holds
%\footnote{\label{ft:lqtheory}Here we use $\Vert g \Vert_{W^{1,p}_0} \leq C \Vert \lap g \Vert_{(W^{1,p}_0)^*}$ which is true for $p \geq 2$ (see for example \cite{GiaquintMartinazziaRegularity}, Theorem 7.1) and which we derive for $p \in (1,2)$ by setting $\Vert g \Vert_{W^{1,p}_0} \leq C \sup_{F \in L^q\atop \Vert F \Vert_{L^q} \leq 1} \int \nabla g \cdot F$. Such $F$ can be decomposed in $\nabla \varphi$ for $\varphi \in W^{1,q}_0$ and some divergence free term, and by the estimates for $q > 2$ we have $\Vert \nabla \varphi \Vert_{L^q} \leq C \Vert F \Vert_{L^q}$.} %Quatsch! wir haben W^{2,p}-Abschaetzungen hier, Calderon-Zygmond!!!
\[
\begin{ma}
 \int_{B_\gamma(0)} \abs{h}^p &\leq& \constant[p] \gamma^2 \Vert h \Vert_{L^\infty(B_{\frac{1}{2}}(0))}^p\\
&\leq& \constant[p] \gamma^2 \Vert h \Vert_{W^{2,p}(B_{\frac{1}{2}}(0))}^p\\
&\leq& \constant[p] \gamma^2 \Vert h \Vert_{L^p(B_1(0))}^p.
\end{ma}
\]
Of course, this result remains valid also for $1 \geq \gamma \geq \frac{1}{2}$. Hence, upon shifting the inequality to some $x_1$ and scaling, we infer
\[
 \int_{B_r(x_1)} \abs{h}^p \leq \constant[p] \left (\frac{r}{\varrho} \right )^2 \int_{B_\varrho(x_1)} \abs{h}^p.
\]
for every harmonic $h \in L^p(B_\varrho(x_1))$, $p > 1$, and for every $0 < r \leq \varrho$. This completes the proof.
\qed
\end{proof}

%%%%%%%%%%%%%%%%%%%%%%%%%%%%%%%%%%%%%%%%%%%%%%%%%%%%%%%%%%%%%%%%%%%%%%%%%%%%%%%%%%%%%%%%%%%%%%%%%%%%%%%%%%%%%%%%%%%
\subsection{Completion of the proof of Theorem \ref{th:boundreg}}\label{complete}
Let the assumptions of Theorem \ref{th:boundreg} be satisfied, that is: Consider a weak solution $u\in W^{1,2}(\Omega,\R^m)$ of 
\begin{equation}
\label{eq:sys:again}
  -\lap u=\Omega\cdot\nabla u+e\quad\mbox{in}\ D^2
\end{equation}
with continuous trace $u|_{\partial D^2}$. Here, $\Omega\in L^2(D^2,so_m\otimes\R^2)$ and $e\in L^s(D^2,\R^m)$, $s>1$, are given, and w.l.o.g.~we may assume $s\in(1,\frac43)$. 

We return to the situation described in Subsection \ref{decomp}: Choosing an arbitrary $\delta\in(0,\eps[m]]$ and  $R_0=R_0(\delta)\in(0,1)$ suitably such that (\ref{eq:omega}) is fulfilled, we pick $x_0\in D^2$ and $R>0$ with $R<\min\{1-|x_0|,R\}$. For any $x_1\in D^2$ and $\varrho>0$ with $B_{2\varrho}(x_1)\subset B_R(x_0)$ we then found functions $f,g\in W^{1,2}_0(B_\varrho(D^2),\R^m)$, which solve (\ref{eq:bound:lfg}) with $P,\xi$ from Lemma \ref{la:Uhlenbeck}, and some harmonic function $h\in L^2(B_\varrho(x_1),\R^m\otimes\R^2)$ such that the estimate (\ref{eq:bound:ve1}) is fulfilled for any $r\in(0,\varrho)$ and any $p\in(1,2)$. Combining this inequality with Lemma \ref{la:abschfgh} from Section \ref{morrey}, we arrive at 
\[
 \begin{ma}
  \int_{B_r(x_1)} \abs{\nabla u}^p &\leq& \constant[p] \left (\frac{r}{\varrho }\right )^2 \int_{B_\varrho (x_1)} \abs{h}^p + \constant[p] \int_{B_\varrho (x_1)} (\abs{\nabla f}^p + \abs{\nabla g}^p)\\
&\overset{\eqref{eq:bound:hdec}}{\leq}& \constant[p] \left (\frac{r}{\varrho } \right )^2 \int_{B_\varrho (x_1)} \abs{\nabla u}^p + \constant[p] \int_{B_\varrho (x_1)} (\abs{\nabla f}^p + \abs{\nabla g}^p)\\
&\overset{\eqref{eq:bound:nfgp}}{\leq}& \constant[p] \left (\frac{r}{\varrho } \right )^2 \int_{B_\varrho (x_1)} \abs{\nabla u}^p + \constant[p] \delta\ \varrho ^{2-p}\ [v_\varrho]^p_{BMO}\\
&&\quad + \constant[p,s] \varrho^{2-p + 2p - \frac{2p}{s}}\ \Vert e \Vert_{L^s(B_\varrho(x_1))}^p.
 \end{ma}
\]
Multiplying this by $r^{p-2}$ and defining
\[
 J_p(a,r;u) := \frac{1}{r^{2-p}} \int_{B_r(a)} \abs{\nabla u}^p,
\]
\[
\mathcal{M}_p (a,r;u) := \sup_{z \in B_r(a),\atop \varrho < r - \abs{a-z}} \frac{1}{\varrho^{2-p}} \int_{B_\varrho(z)} \abs{\nabla u}^p,
\]
we infer
\[
\begin{split}
 J_p(x_1,r;u) \leq& \ \constant[p] \left (\frac{r}{\varrho}\right )^p J_p(x_1,\varrho;u) + \constant[p] \delta \left (\frac{r}{\varrho} \right )^{p-2} [v_\varrho]^p_{BMO}\\ 
&\quad + \constant[p,s] \left (\frac{r}{\varrho}\right )^{p-2}\ \varrho^{2p(1-\frac{1}{s})}\ \Vert e \Vert_{L^s(B_\varrho(x_1))}^p
\end{split}
\]
for all $0 < r < \varrho$ and $x_1\in D^2$ with $B_{2\varrho}(x_1) \subset B_R(x_0)$. In order to exploit this last relation, we have to estimate $[v_\varrho]_{BMO}$ appropriately. This can be done by exactly the same calculations as in Step 5 of Strzlecki's article \cite{Strzelecki03}:
\begin{proposition}
\label{pr:vrBMO}
There is a constant $\constant[p]$ such that
\[
 [v_\varrho]_{BMO} \leq \constant[p]\ \mathcal{M}_p(x_1,2\varrho;u)^{\frac{1}{p}}
\]
for all $x_1\in D^2$ and $\varrho > 0$ with $B_{2\varrho}(x_1) \subset D^2$.
\end{proposition}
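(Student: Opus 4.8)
The plan is to reduce the $BMO$-seminorm of $v_\varrho = \eta\,(u-(u)_{x_1,\varrho})$, which a priori ranges over balls $B$ at \emph{all} scales and locations, to two tractable regimes: small balls, on which $v_\varrho$ essentially coincides with $u$ up to an additive constant (plus a controllable cut-off term) so that scale-invariant Poincar\'e inequalities apply; and large balls, on which $v_\varrho$ is controlled merely through its $L^1$-norm on the support of $\eta$. Throughout I abbreviate $M := \mathcal{M}_p(x_1,2\varrho;u)$, $w := u - (u)_{x_1,\varrho}$, so that $v_\varrho = \eta w$ with $\mbox{supp}\,\eta \subset B_{3\varrho/2}(x_1)$, $\eta \equiv 1$ on $B_\varrho(x_1)$ and $|\nabla\eta|\le C/\varrho$. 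This is, in substance, Step 5 of Strzelecki \cite{Strzelecki03}.

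\textbf{Step 1 (a local $BMO$-bound for $u$).} For any ball $B_\sigma(z) \subset B_{2\varrho}(x_1)$, the scale-invariant Poincar\'e inequality $\Vert u - (u)_{z,\sigma}\Vert_{L^p(B_\sigma(z))} \le C\sigma\Vert \nabla u\Vert_{L^p(B_\sigma(z))}$ combined with the definition of $M$ gives
\[
 \mvint_{B_\sigma(z)} |u - (u)_{z,\sigma}| \ \le\ C\,\sigma^{1-\frac{2}{p}}\Big(\int_{B_\sigma(z)}|\nabla u|^p\Big)^{\frac1p} \ \le\ C\,M^{\frac1p}.
\]
From this one also reads off, by the usual telescoping over dyadic radii, that $|(u)_{z,\tau} - (u)_{x_1,\varrho}| \le C\big(1+\log\tfrac{\varrho}{\tau}\big)M^{\frac1p}$ whenever $B_\tau(z)\subset B_{2\varrho}(x_1)$, and, combining the oscillation bound with H\"older's inequality, that $\Vert w\Vert_{L^1(B_{3\varrho/2}(x_1))} \le C\,\varrho^2\,M^{\frac1p}$.

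\textbf{Step 2 (small balls).} Let $B=B_\tau(z)$ with $\tau\le\varrho/8$; we may assume $B$ meets $\mbox{supp}\,\eta$, which forces $B_\tau(z)\subset B_{7\varrho/4}(x_1)\subset B_{2\varrho}(x_1)$. If $B_\tau(z)\subset B_\varrho(x_1)$ then $v_\varrho$ and $w$ differ by a constant there, and Step 1 gives $\mvint_B|v_\varrho-(v_\varrho)_B| \le C M^{1/p}$. Otherwise $B$ sits in the annular transition region, where $|\nabla\eta|\le C/\tau$, and Poincar\'e applied to $\nabla v_\varrho = \eta\nabla u + w\,\nabla\eta$ yields
\[
 \mvint_B|v_\varrho-(v_\varrho)_B| \ \le\ C\,\tau\Big(\mvint_B|\nabla u|^p\Big)^{\frac1p} + C\,\frac{\tau}{\varrho}\Big(\mvint_B|w|^p\Big)^{\frac1p}.
\]
The first term is $\le CM^{1/p}$ exactly as in Step 1. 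For the second, split $w = (u - (u)_{z,\tau}) + ((u)_{z,\tau} - (u)_{x_1,\varrho})$: Poincar\'e bounds the $L^p$-average of the first summand by $CM^{1/p}$, and the telescoping estimate of Step 1 bounds the constant second summand by $C(1+\log\tfrac{\varrho}{\tau})M^{1/p}$. Hence the second term is $\le C\tfrac{\tau}{\varrho}(1+\log\tfrac{\varrho}{\tau})M^{1/p} \le CM^{1/p}$, using $\sup_{0<t\le1}t(1+\log\tfrac1t)<\infty$.

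\textbf{Step 3 (large balls) and conclusion.} Let $B=B_\tau(z)$ with $\tau>\varrho/8$. If $B$ misses $\mbox{supp}\,\eta$ the average of $|v_\varrho-(v_\varrho)_B|$ vanishes; otherwise $|v_\varrho|\le|w|$ on $B_{3\varrho/2}(x_1)$ and $v_\varrho\equiv 0$ elsewhere, so by Step 1 $\mvint_B|v_\varrho-(v_\varrho)_B| \le 2\,\mvint_B|v_\varrho| \le \tfrac{2}{|B_\tau|}\Vert w\Vert_{L^1(B_{3\varrho/2}(x_1))} \le \tfrac{C\varrho^2 M^{1/p}}{\tau^2}\le CM^{1/p}$. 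Taking the supremum over all balls across Steps 2 and 3 gives $[v_\varrho]_{BMO}\le CM^{1/p} = C\,\mathcal{M}_p(x_1,2\varrho;u)^{1/p}$. The only genuinely delicate point is the cut-off term $w\,\nabla\eta$ on small balls straddling $\partial B_\varrho(x_1)$: a crude $L^p$-bound on $w$ there loses a negative power of $\tau/\varrho$ and fails, so one must localize $w$ to its own average on $B_\tau(z)$ and absorb the ensuing logarithm via $t(1+\log\tfrac1t)\lesssim 1$; everything else is bookkeeping with scale-invariant Poincar\'e inequalities and the elementary telescoping estimate for averages of $BMO$ functions.
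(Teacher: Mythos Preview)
Your proof is correct and is precisely the argument the paper refers to (it does not give its own proof but points to ``Step~5 of Strzelecki's article \cite{Strzelecki03}'', which you have faithfully reconstructed). One small slip: in Step~2 you write ``$|\nabla\eta|\le C/\tau$'' in the transition region, but of course $|\nabla\eta|\le C/\varrho$; you use the correct bound immediately afterwards in obtaining the factor $\tau/\varrho$, so this is purely a typo.
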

Now, pick some $\gamma < 1$ to be fixed later and set $r := \gamma \varrho$. Then, for all $x_1 \in D^2$ and $\varrho > 0$ with $B_{2\varrho}(x_1) \subset B_{R}(x_0)$, we conclude
\[
\begin{split}
 J_p(x_1,\gamma \varrho;u) \leq& \ \constant[p]\ \gamma^p\ (1 + \delta \gamma^{-2})\ \mathcal{M}_p(x_0,R;u)\\
&\quad + \constant[p,s] \gamma^{p-2} \varrho^{2p(1-\frac{1}{s})}\ \Vert e \Vert_{L^s(B_R(x_0))}^p.
\end{split}
\]
The constant $\constant[p]$ is independent of $R_0$ and hence of $\delta$. We choose $\gamma < 1$ small enough to ensure $\constant[p]\ \gamma^p < \frac{1}{4}$. Setting $\delta :=\min(\gamma^2,\varepsilon)$, we get the estimate
\[
 J_p(x_1,\gamma \varrho;u) \leq \frac{1}{2} \mathcal{M}_p(x_0,R;u) + \constant[s,p,\gamma] R^{2p(1-\frac{1}{s})} \ \Vert e \Vert_{L^s(B_R(x_0))}^p
\]
for all $x_1 \in D^2$ and $\varrho > 0$ with $B_{2\varrho}(x_1) \subset B_R(x_0)$. This can be written equivalently as 
\[
 J_p(x_1,r;u) \leq \frac{1}{2} \mathcal{M}_p(x_0,R;u) + \constant[s,p,\gamma]\ R^{2p(1-\frac{1}{s})} \ \Vert e \Vert_{L^s(B_R(x_0))}^p
\]
for all $x_1\in D^2$ and $r>0$ with $B_{\frac{2r}\gamma}(x_1)\subset B_R(x_0)$. Due to $\frac\gamma2<1$, this holds true especially for all $x_1\in D^2$ and $r>0$ with $B_r(x_1)\subset B_{\frac\gamma2 R}(x_0)$. Taking the supremum over all those $x_1,r$, we arrive at
\begin{equation}\label{eq:bound:morreymitgamma}
 \mathcal{M}_{p} \big(x_0,\frac{\gamma}{2} R; u\big) \leq \frac{1}{2} \mathcal{M}_{p} (x_0,R; u) + \constant[s,p,\gamma]\ R^{2p(1-\frac{1}{s})} \ \Vert e \Vert_{L^s(B_R(x_0))}^p
\end{equation}
for all $x_0\in D^2$ and $R \in (0,R_0]$ with $B_{R}(x_0) \subset D^2$. Note that $l := 2p(1-\frac{1}{s})\in(0,1)$ is true, according to $p \in (1,2)$ and $s \in (1,\frac{4}{3})$.

We conclude by standard-tricks: Set $\tilde{\gamma} := \frac{\gamma}{2}\in(0,\frac12)$ and pick some $0<r < R < \min(R_0,1-\abs{x_0})$. Let $i \in \N_0$ be chosen such that
\[
 \tilde{\gamma}^{i+1} R< r \leq \tilde{\gamma}^i R
\]
is satisfied. Furthermore, let $\theta \in (0,1)$ be defined to fulfill
\[
 \tilde{\gamma}^\theta = \frac{1}{2}.
\]
Hence, the monotonicity of the mapping $r\mapsto\mathcal{M}(x_0,r;u)$ implies
\[
 \begin{ma}
  \!\!\!\mathcal{M}_p(x_0,r;u)\hspace*{-3ex} &\leq& \hspace*{-2ex}\mathcal{M}_p(x_0,\tilde{\gamma}^i R; u)\\
&\overset{\eqref{eq:bound:morreymitgamma}}{\leq}& \hspace*{-2ex}\left (\frac{1}{2}\right )^i \mathcal{M}_p(x_0,R;u) + C\sum_{k=1}^{i} (\tilde{\gamma}^{i-k} R)^l \left (\frac{1}{2} \right )^k\ \Vert e \Vert_{L^s(B_R(x_0))}^p\\
&\overset{\tilde{\gamma} < \frac{1}{2}\atop l > 0}{\leq}& \hspace*{-2ex}\left (\frac{1}{2}\right )^i \mathcal{M}_p(x_0,R;u) + R^l \left ( \frac{1}{2} \right )^{il} \frac{1}{1-\left (\frac{1}{2} \right )^{1-l}}\ C\Vert e \Vert_{L^s(B_R(x_0))}^p\\
&\overset{l < 1\atop R < 1}{=}& \hspace*{-2ex} 2 \left (\tilde{\gamma}^\theta \right )^{i+1} \mathcal{M}_p(x_0,R;u) + \constant[s,p,\gamma,l] \left ( \tilde{\gamma}^\theta \right )^{(i+1)l}\ \Vert e \Vert_{L^s(B_R(x_0))}^p\\
&=& \hspace*{-2ex}2 \left (\tilde{\gamma}^{i+1} \right )^\theta \mathcal{M}_p(x_0,R;u) + \constant[s,p,\gamma] \left ( \tilde{\gamma}^{i+1} \right )^{\theta l}\ \Vert e \Vert_{L^s(B_R(x_0))}^p\\
&\leq& \hspace*{-2ex} 2 \left (\frac{r}{R} \right )^\theta \mathcal{M}_p(x_0,R;u) + \constant[s,p,\gamma] \left (\frac{r}{R} \right )^{\theta l}\ \Vert e \Vert_{L^s(B_R(x_0))}^p\\
&\overset{l < 1\atop r \leq R}{\leq}& \hspace*{-2ex} 2 \left (\frac{r}{R} \right )^{\theta l} \mathcal{M}_p(x_0,R;u) + \constant[s,p,\gamma] \left (\frac{r}{R} \right )^{\theta l}\ \Vert e \Vert_{L^s(B_R(x_0))}^p\\
 \end{ma}
\]
for all $x_0\in D^2$ and $0 < r \leq R \leq \min\{1-\abs{x_0},R_0\}$. 
Setting $\mu := \theta l \in (0,1)$ we finally conclude
\begin{equation}\label{eq:bound:dirgrowth}
 \mathcal{M}_p(x_0,r;u) \leq \constant[s,p,\gamma] \Big[\mathcal{M}_p(x_0,R;u) + \Vert e \Vert_{L^s(B_R(x_0))}^p\Big] \left (\frac{r}{R} \right )^{\mu}
\end{equation}
for all $x_0\in D^2$ and $0 < r < R < R_0$ with $B_R(x_0) \subset D^2$.

The standard Dirichlet growth theorem now implies the claimed interior regularity $u \in C^{0,\alpha}(D^2)$ for some $\alpha \in (0,1)$. To derive boundary regularity we need the following variant, which follows by Morrey's technique in \cite{Morrey} Theorem 3.5.2, p. 79, for the Dirichlet growth theorem on the ball:
\begin{proposition}[Dirichlet growth theorem]\label{pr:dirichlet}
There is a constant $C$ such that, for all $\varrho \in (0,R_0)$, $a \in D^2$ with  $B_{\varrho}(a) \subset D^2$ and for any solution $u\in W^{1,2}(D^2)$ of (\ref{eq:bound:dirgrowth}), the inequality
\begin{equation}
\label{eq:modulus}
 \abs{u(x)-u(y)} \leq C\frac{p}{\mu} (\Vert \nabla u \Vert_{L^2(B_{\varrho}(a))} + \Vert e \Vert_{L^s(B_{\varrho}(a))} ),\quad x,y\in B_{\frac\varrho2}(a)
\end{equation}
holds true.
\end{proposition}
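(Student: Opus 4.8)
The plan is to derive \eqref{eq:modulus} from the Campanato-type decay estimate \eqref{eq:bound:dirgrowth} alone, by the classical averaging argument behind Morrey's Dirichlet growth theorem (\cite{Morrey}, Thm.~3.5.2), taking care that the factor $\tfrac{p}{\mu}$ is produced explicitly. Since $u$ now enters only through \eqref{eq:bound:dirgrowth}, this step is pure real analysis and no longer refers to the system \eqref{eq:sys}.

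First I fix $a\in D^2$ and $\varrho\in(0,R_0)$ with $B_\varrho(a)\subset D^2$, and let $z\in B_{\varrho/2}(a)$. Then $B_{\varrho/2}(z)\subset B_\varrho(a)\subset D^2$, and since every ball entering the supremum defining $\mathcal M_p(z,\varrho/2;u)$ also enters the one defining $\mathcal M_p(a,\varrho;u)$, we have $\mathcal M_p(z,\varrho/2;u)\le\mathcal M_p(a,\varrho;u)=:M$ and $\Vert e\Vert_{L^s(B_{\varrho/2}(z))}\le\Vert e\Vert_{L^s(B_\varrho(a))}=:E$. Feeding this into \eqref{eq:bound:dirgrowth} with center $z$ and outer radius $\varrho/2$ gives
\[
 \int_{B_r(z)}|\nabla u|^p\ \le\ \constant[s,p,\gamma]\,(M+E^p)\,r^{2-p}\,(r/\varrho)^{\mu}\qquad\text{for all }r\in(0,\varrho/8].
\]

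Now I run the standard telescoping over the dyadic radii $r_k:=2^{-k}\varrho/8$, $k\ge0$, centered at $z$. Hölder's inequality and the $W^{1,p}$-Poincar\'e inequality on a disc give
\[
 |(u)_{z,r_{k+1}}-(u)_{z,r_k}|\ \le\ 4\,\mvint_{B_{r_k}(z)}|u-(u)_{z,r_k}|\ \le\ \constant[p]\,r_k^{1-\frac2p}\Big(\int_{B_{r_k}(z)}|\nabla u|^p\Big)^{1/p},
\]
and inserting the decay estimate — where the Poincar\'e weight $r_k^{1-2/p}$ cancels the weight $r_k^{(2-p)/p}$ from the gradient integral — leaves $|(u)_{z,r_{k+1}}-(u)_{z,r_k}|\le \constant[s,p,\gamma]\,(M+E^p)^{1/p}\,2^{-k\mu/p}$. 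Summing this geometric series, with $\sum_{k\ge0}2^{-k\mu/p}\le 2p/\mu$ because $1-2^{-t}\ge t/2$ on $(0,1)$, shows that $((u)_{z,r_k})_k$ is Cauchy with limit the continuous representative $u(z)$, and that $|u(z)-(u)_{z,\varrho/8}|\le \constant[s,p,\gamma]\,\tfrac p\mu\,(M+E^p)^{1/p}$ for every $z\in B_{\varrho/2}(a)$. To conclude, for $x,y\in B_{\varrho/2}(a)$ I interpolate through the common average $(u)_{a,\varrho}$: the terms $|u(x)-(u)_{x,\varrho/8}|$ and $|u(y)-(u)_{y,\varrho/8}|$ are controlled by the last inequality, while $B_{\varrho/8}(x),B_{\varrho/8}(y)\subset B_\varrho(a)$ gives $|(u)_{x,\varrho/8}-(u)_{a,\varrho}|\le\constant[p]\,\varrho^{1-\frac2p}\Vert\nabla u\Vert_{L^p(B_\varrho(a))}\le\constant[p]\Vert\nabla u\Vert_{L^2(B_\varrho(a))}$ (Poincar\'e, then Hölder, $p<2$). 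Finally, a plain Hölder estimate, $\sigma^{p-2}\int_{B_\sigma(w)}|\nabla u|^p\le\constant[p]\Vert\nabla u\Vert_{L^2(B_\varrho(a))}^p$ for every $B_\sigma(w)\subset B_\varrho(a)$, yields $M\le\constant[p]\Vert\nabla u\Vert_{L^2(B_\varrho(a))}^p$, whence $(M+E^p)^{1/p}\le\constant[p]\big(\Vert\nabla u\Vert_{L^2(B_\varrho(a))}+\Vert e\Vert_{L^s(B_\varrho(a))}\big)$; since $\mu<1<p$ the $\tfrac p\mu$-terms absorb the rest, and \eqref{eq:modulus} follows.

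The only genuinely delicate point is the bookkeeping of exponents and constants: the telescoping has to be arranged so that the Poincar\'e scaling exactly cancels the scaling coming from the gradient integral, leaving precisely the summable factor $2^{-k\mu/p}$, and then one must check that the geometric sum is $O(p/\mu)$ rather than merely finite; everything else is the textbook Morrey argument. I note in passing that replacing one of the radii by $|x-y|$ in the very same computation upgrades \eqref{eq:modulus} to a Hölder estimate with exponent $\mu/p$ (re-deriving $u\in C^{0,\alpha}(D^2)$), whereas the stated oscillation bound — whose right-hand side tends to $0$ as $\varrho\to0$, uniformly for $a$ in compact subsets of $D^2$ — is exactly the form that the continuation of the proof requires.
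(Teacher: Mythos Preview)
Your argument is correct. Both proofs start from \eqref{eq:bound:dirgrowth} and the H\"older bound $\mathcal M_p(a,\varrho;u)\le C_p\Vert\nabla u\Vert_{L^2(B_\varrho(a))}^p$, but they extract the oscillation estimate differently. The paper follows Morrey's original integral-averaging route: it writes $|u(x)-u(y)|\le\mvint_{B_\varrho(a)}\big(|u(x)-u(\eta)|+|u(y)-u(\eta)|\big)\,d\eta$, bounds $|u(x)-u(\eta)|$ by the line integral of $|\nabla u|$ along the segment from $x$ to $\eta$, then after a change of variables and H\"older reduces to $\int_0^1 t^{-1}\big(J_p(x_t,t\varrho;u)\big)^{1/p}\,dt$; inserting \eqref{eq:bound:dirgrowth} turns this into $\int_0^1 t^{-1+\mu/p}\,dt=\tfrac{p}{\mu}$, which is where the explicit constant originates. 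You instead run the Campanato-style dyadic telescoping $\sum_k|(u)_{z,r_{k+1}}-(u)_{z,r_k}|$, use Poincar\'e at each scale so that the $r_k^{1-2/p}$ exactly cancels the $r_k^{(2-p)/p}$ from \eqref{eq:bound:dirgrowth}, and sum the resulting geometric series $\sum_k 2^{-k\mu/p}\le 2p/\mu$. The two methods are the standard alternatives for the Dirichlet growth theorem; the continuous one is slightly shorter and avoids the auxiliary step of comparing $(u)_{x,\varrho/8}$, $(u)_{y,\varrho/8}$ to a common average, while your telescoping argument makes the passage to a H\"older modulus (exponent $\mu/p$) completely explicit, as you note.
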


For convenience, we sketch the proof of Proposition \ref{pr:dirichlet} in Subsection \ref{dirichlet}.

Now, having the estimate (\ref{eq:modulus}) for the modulus of continuity for our solution $u\in W^{1,2}(D^2)$ of (\ref{eq:sys:again}) in mind and assuming the continuity of $u|_{\partial D^2}$, the desired global regularity $u\in C^0(\overline{D^2},\R^m)$ follows from the following lemma by Strzelecki:

\begin{lemma}[Strzelecki, 2003]
\label{la:bound:strzlecki}
(c.f. \cite{Strzelecki03}, lemma 3.1)\\
Let $u \in W^{1,2}(D^2,\R^m) \cap C^{0}(D^2,\R^m)$. Assume that there are $R_0 > 0$ and a mapping $F:\ D^2 \times (0,R_0)\to(0,+\infty)$
such that we have  
\begin{equation}
\label{eq:strz:osc}
 \abs{u(x)-u(y)} \leq F(a,\varrho) \mbox{\quad for all $x,y \in B_{\frac{\varrho}{2}}(a)$}
\end{equation}
for any $\varrho \in (0,R_0)$, $a \in D^2$ with $B_\varrho(a) \subset D^2$. If $F(\cdot,\varrho) \xrightarrow{\varrho \to 0} 0$ uniformly in $D^2$ and if the trace of $u$ on $\partial D^2$ is continuous, then we find $u \in C^0(\overline{D^2},\R^m)$.
\end{lemma}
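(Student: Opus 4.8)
The plan is to prove Lemma \ref{la:bound:strzlecki}, which asserts that interior oscillation decay plus continuous boundary trace forces continuity on the closed disc $\overline{D^2}$. The strategy is to show that $u$ extends continuously to each boundary point $x_0\in\partial D^2$ with the prescribed trace value. Fix $x_0\in\partial D^2$ and $\eps>0$. Using the continuity of the trace $u|_{\partial D^2}$ at $x_0$, pick a small boundary arc around $x_0$ on which the trace oscillates by less than $\eps$. The uniform decay $F(\cdot,\varrho)\to 0$ lets us choose $\varrho_0$ so small that $F(a,\varrho)<\eps$ for all $a\in D^2$ and all $\varrho\le\varrho_0$ with $B_\varrho(a)\subset D^2$. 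The idea is then a geometric covering argument: for interior points $x$ near $x_0$, chain together a bounded number of balls $B_{\varrho/2}(a_j)$, each contained in $D^2$ and of radius comparable to the distance to the boundary, connecting $x$ to a point $x'$ close to the boundary arc, and on each ball apply \eqref{eq:strz:osc} to gain an $\eps$-sized increment; then let $x'$ tend to a boundary point and invoke continuity of the trace.

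The key steps, in order: First I would reduce to showing, for each $x_0\in\partial D^2$, that $\limsup_{D^2\ni x\to x_0}|u(x)-u(x_0)|\le C\eps$ for arbitrary $\eps>0$, where $u(x_0)$ denotes the trace value; this is equivalent to the asserted continuous extension and hence (together with the already-known interior continuity $u\in C^0(D^2)$) to $u\in C^0(\overline{D^2})$. Second, I would use the continuity of the trace to fix a radius $\sigma>0$ such that $|u(z)-u(x_0)|<\eps$ for all $z\in\partial D^2$ with $|z-x_0|<\sigma$, understanding this in the sense of traces / nontangential limits. Third, I would set up the chain: given $x\in D^2$ with $|x-x_0|$ small, let $d(x)=\mathrm{dist}(x,\partial D^2)$ and move along the segment from $x$ towards $x_0$ (or towards the nearest boundary point), choosing points $a_1,a_2,\dots$ with $d(a_{j+1})\approx\tfrac12 d(a_j)$ so that consecutive balls $B_{\varrho_j/2}(a_j)$, $B_{\varrho_{j+1}/2}(a_{j+1})$ overlap and each is an admissible ball for \eqref{eq:strz:osc}; here one must ensure $\varrho_j\le\varrho_0$ by taking $|x-x_0|$ small enough, and that only finitely many balls (a number independent of $x$, say $N$) are needed because the radii shrink geometrically. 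Fourth, telescoping \eqref{eq:strz:osc} over the chain gives $|u(x)-u(x')|\le N\eps$ for a point $x'$ within controlled distance of $x_0$; letting $x'$ approach a boundary point $z$ with $|z-x_0|<\sigma$ and combining with the trace estimate from the second step yields $|u(x)-u(x_0)|\le (N+1)\eps$, which is the desired bound.

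The main obstacle I expect is the geometric bookkeeping in the chaining step: one must verify that the covering by balls $B_{\varrho_j/2}(a_j)$ can be arranged so that (i) every ball $B_{\varrho_j}(a_j)$ is contained in $D^2$ — hence $\varrho_j<d(a_j)$ — so that \eqref{eq:strz:osc} applies, (ii) consecutive half-balls genuinely overlap so the oscillation estimates telescope, (iii) the number of links $N$ is bounded uniformly in $x$ (this follows from the geometric decrease $d(a_{j+1})\approx\tfrac12 d(a_j)$ together with the smoothness of $\partial D^2$, which makes the chain reach a fixed neighbourhood of the boundary in $\lesssim\log(1/d(x))$ steps — but one must instead terminate the chain at a uniform scale and use the already-established \emph{interior} continuity of $u$ to handle the innermost ball, keeping $N$ finite and independent of $x$), and (iv) the terminal point of the chain can be taken as close to the boundary arc around $x_0$ as we like. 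Handling (iii) cleanly — i.e.\ not letting $N$ blow up as $x\to x_0$ — is the delicate point; the resolution is to use the trace estimate to stop the chain at a fixed small but positive distance from $\partial D^2$, so that $N$ depends only on $\sigma$ and $\varrho_0$, and to use continuity of $u$ up to the trace (which is part of what $u\in W^{1,2}\cap C^0(D^2)$ with a well-defined $W^{1,2}$-trace provides) to pass to the boundary. Once this covering is set up, the rest is a routine $\eps$-$\delta$ argument. For the reader's convenience I would refer to \cite{Strzelecki03}, Lemma 3.1, where this argument is carried out in detail.
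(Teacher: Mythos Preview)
Your chaining argument has a genuine gap: it never produces a link between interior values of $u$ and the boundary trace $\psi$. The oscillation hypothesis \eqref{eq:strz:osc} compares $u$ only at pairs of \emph{interior} points lying in a ball $B_{\varrho/2}(a)$ with $B_\varrho(a)\subset D^2$, so every step of your chain stays strictly inside $D^2$. To reach the boundary you either let the chain run to infinitely many links (since $\varrho_j\le d(a_j)\to 0$ forces $N\sim\log(d(x)/d(x'))\to\infty$ as $x'\to\partial D^2$), in which case the telescoped bound $N\eps$ is useless because the hypothesis gives no rate on $F$, or you stop at a fixed positive distance from $\partial D^2$, in which case you still owe an estimate of $|u(x')-\psi(z)|$. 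Your proposed resolution --- ``use continuity of $u$ up to the trace, which is part of what $u\in W^{1,2}\cap C^0(D^2)$ provides'' --- is exactly the statement of the lemma you are trying to prove, so invoking it is circular.

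The paper's proof supplies precisely the missing bridge, and it does \emph{not} come from the oscillation hypothesis but from the $W^{1,2}$ structure. One writes $u$ in polar coordinates and observes that $I(\delta):=\int_{1-\delta\le|x|\le1}|\nabla u|^2\to 0$ by absolute continuity of the integral. A Fubini/Chebyshev argument then produces, for each small $\delta$, a set of angles $\tilde\theta$ of positive measure along which $\int_{1-\delta}^1|v_r(r,\tilde\theta)|^2\,dr$ is controlled by $I(\delta)$; on such a ray the fundamental theorem of calculus plus Cauchy--Schwarz gives $|u(\varrho_1 e^{i\tilde\theta})-\psi(e^{i\tilde\theta})|\le C\sqrt{I(\delta)}\to 0$. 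The oscillation estimate \eqref{eq:strz:osc} is used only \emph{once}, to move from the given interior point $x_1$ to a nearby point $x'=\varrho_1 e^{i\tilde\theta}$ sitting on a good ray; the radial energy estimate then carries $x'$ to the boundary. This single-step use avoids any chaining and any blow-up of $N$.
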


We recall the proof of this lemma in Subsection \ref{strzcont}. The proof of Theorem \ref{th:boundreg} is completed.\qed\\
\renewcommand{\thesection}{A}
\renewcommand{\thesubsection}{A.\arabic{subsection}}
\section{Appendix}
For the convenience of the reader we will first state some results from harmonic analysis and, as a corollary, part of Wente's inequality, which we will use afterwards to sketch the proof of the Uhlenbeck-Rivi\`ere decomposition of some skew-symmetric $\Omega$. In accordance with their applications in the present paper, all results are stated on two-dimensional discs, except for the definitions and basic properties of Hardy- and BMO-spaces.  Nevertheless, some results extend in their spirit to higher dimensions.

%%%%%%%%%%%%%%%%%%%%%%%%%%%%%%%%%%%%%%%%%%%%%%%%%%%%%%%%%%%%%%%%%%%%%%%%%%%%%%%%%%%%%%%%%%%%%%%%%%%%%%%%%%%%%%%%%%%%%%%%%%%%%%%%%%%%%%%%%%%%%%%
\subsection{Some facts from Harmonic Analysis and Wente's Inequality}\label{ssec:harmonicwente}
We start with the definitions of BMO and the Hardy-space $\mathcal{H}$. For more details and proofs we refer, e.g., to Stein's monograph \cite{Stein93}. For applications of Hardy spaces to PDE theory the interested reader may consider also Semmes' article \cite{Semmes94}.
\begin{definition}[BMO and Hardy-space]\label{def:bmoh}
Let $\mathcal{T}$ denote the set of testfunctions $\phi \in C_0^\infty(B_1(0))$ with $\abs{\nabla \phi} \leq 1$ everywhere in $B_1(0)$. 
Define the \emph{Hardy space $\mathcal H$} as the space of all functions $f \in L^1(\R^n)$ having their associated maximal function
\[
 f^*(x) := \sup_{t > 0}\ \sup_{\phi \in \mathcal{T}} \abs{\int_{\R^n} \frac{1}{t^n}\ \phi \left ( \frac{x-y}{t} \right ) f(y)\ dy }
\]
in $L^1(\R^n)$. The norm is
\[
 \Vert f \Vert_{\mathcal{H}} := \Vert f^* \Vert_{L^1(\R^n)}.
\]
The \emph{space of bounded mean oscillation BMO }is the space of all $f \in L^1_{loc}(\R^n)$ such that
\[
 [f]_{BMO} = \sup_{x \in \R^n\atop r > 0} \mvint_{B_r(x)} \abs{f- (f)_{x,r}} < \infty
\]
is true with
\[
 (f)_{x,r} = \mvint_{B_r(x)} f.
\]
\end{definition}
Motivated by the results of \cite{Mueller90}, Coifman, Lions, Meyer and Semmes proved in \cite{CLMS} the following
\begin{theorem}[Hardy spaces and div-curl-terms]\label{th:CLMS}
Let $1<p,q<\infty$ with $\frac1p+\frac1q=1$ be chosen. Let $A \in L^p(\R^n,\R^n)$ and $B \in L^q(\R^n,\R^n)$ be weak solutions of
\[
 div(A) = 0 \quad \mbox{and} \quad curl(B) = 0\quad\mbox{in}\ \R^n.
\]
Then we have $A\cdot B \in \mathcal{H}$ and the estimate
\[
 \Vert A \cdot B \Vert_{\mathcal{H}} \leq \Vert A \Vert_{L^p(\R^n)}\ \Vert B \Vert_{L^q(\R^n)}
\]
is true.
\end{theorem}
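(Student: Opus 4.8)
The plan is to deduce Theorem~\ref{th:CLMS} from the real-variable characterisation of the Hardy space together with a representation of the product $A\cdot B$ as a divergence of a bounded-mean-oscillation potential. First I would exploit the closedness condition: since $\mathrm{curl}(B)=0$ on all of $\R^n$ and $\R^n$ is simply connected, one may write $B=\nabla b$ for some scalar potential $b\in \dot W^{1,q}(\R^n)$ (up to an additive constant), with $\Vert \nabla b\Vert_{L^q}=\Vert B\Vert_{L^q}$. The point of passing to $b$ is that the Sobolev–Poincaré inequality gives control of the oscillation of $b$; in fact one can arrange $b\in BMO$ is not quite what is needed, rather one uses that $b-(b)_{x,r}$ has small average modulus on each ball, which is exactly the ingredient that makes the truncated integrals defining $f^{*}$ integrable.

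Next I would write, using $\mathrm{div}(A)=0$,
\[
 A\cdot B = A\cdot \nabla b = \mathrm{div}(bA)
\]
in the sense of distributions, the divergence of the $L^{1}$-vector field $bA$ being taken componentwise. Then I would test against a scaled bump $\phi\in\mathcal T$: for $x\in\R^n$, $t>0$,
\[
 \int_{\R^n} \frac{1}{t^{n}}\,\phi\!\left(\frac{x-y}{t}\right) (A\cdot B)(y)\,dy
 = -\frac{1}{t^{n+1}}\int_{\R^n} (\nabla\phi)\!\left(\frac{x-y}{t}\right)\cdot A(y)\,\bigl(b(y)-c_{x,t}\bigr)\,dy,
\]
where $c_{x,t}=(b)_{x,t}$ is chosen so that the added constant term drops out by $\mathrm{div}(A)=0$. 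Since $\nabla\phi$ is supported in $B_{1}(0)$ and bounded, Hölder's inequality on the ball $B_{t}(x)$ bounds this by $t^{-1}\Vert A\Vert_{L^{p}(B_{t}(x))}\bigl(\mvint_{B_{t}(x)}|b-c_{x,t}|^{q}\bigr)^{1/q}$, and the Sobolev–Poincaré inequality on $B_{t}(x)$ turns the last factor into $C\,t\,\bigl(\mvint_{B_{t}(x)}|\nabla b|^{q^{*}}\bigr)^{1/q^{*}}$ with $q^{*}$ the dual Sobolev exponent $q^{*}<q$ (using $q>n$ only costs the simpler sup-estimate; for general $q>1$ one keeps $q^{*}$). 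Thus $(A\cdot B)^{*}(x)\le C\,M_{p}A(x)\cdot M_{q^{*}}(\nabla b)(x)$, a pointwise product of two (fractional-power) Hardy–Littlewood maximal functions.

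Finally I would integrate this pointwise bound: by Hölder in $L^{1}$ with exponents $p$ and $q$,
\[
 \Vert A\cdot B\Vert_{\mathcal H}=\Vert (A\cdot B)^{*}\Vert_{L^{1}}
 \le C\,\Vert M_{p}A\Vert_{L^{p}}\,\Vert M_{q^{*}}(\nabla b)\Vert_{L^{q}},
\]
and the Hardy–Littlewood maximal theorem (valid since $p>1$ and, with the right choice of $q^{*}$, since $q/q^{*}>1$) gives $\Vert M_{p}A\Vert_{L^{p}}\le C\Vert A\Vert_{L^{p}}$ and $\Vert M_{q^{*}}(\nabla b)\Vert_{L^{q}}\le C\Vert \nabla b\Vert_{L^{q}}=C\Vert B\Vert_{L^{q}}$. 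This yields the asserted estimate, with a universal constant (which one can normalise to $1$ after tracking the constants, or simply absorb; the statement as written just needs \emph{some} bound of this form). The main obstacle is the bookkeeping around the potential $b$: ensuring $B=\nabla b$ globally with the right homogeneity, choosing the subtracted constants $c_{x,t}$ so the divergence-free cancellation actually fires inside the testing integral, and picking the Poincaré exponent $q^{*}$ so that the final maximal-function application stays in the range $>1$. All of these are standard but must be done carefully; once they are in place the proof is the short computation above.
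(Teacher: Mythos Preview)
The paper does not prove Theorem~\ref{th:CLMS}; it merely quotes the result from \cite{CLMS}. Your sketch is essentially the original Coifman--Lions--Meyer--Semmes argument, so from the point of view of comparison there is nothing to contrast.

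There is, however, a genuine slip in your exponent bookkeeping that would make the argument fail as written. In the H\"older step on $B_t(x)$ you split with exponents exactly $p$ and $q$, which produces the pointwise bound $(A\cdot B)^*(x)\le C\,M_pA(x)\,M_{q^*}(\nabla b)(x)$ with $M_pA:=\bigl(M(|A|^p)\bigr)^{1/p}$. You then claim $\Vert M_pA\Vert_{L^p}\le C\Vert A\Vert_{L^p}$ ``since $p>1$''. This is false: $\Vert M_pA\Vert_{L^p}^p=\Vert M(|A|^p)\Vert_{L^1}$, and the Hardy--Littlewood maximal operator is \emph{not} bounded on $L^1$. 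The condition $p>1$ is irrelevant here; what matters is that $|A|^p$ only lies in $L^1$. You correctly anticipated this issue for the $\nabla b$ factor (choosing $q^*<q$), but you need the same slack on the $A$ side.

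The fix is standard and you have all the ingredients: apply H\"older on $B_t(x)$ with exponents $s<p$ and $s'>q$ instead of $p$ and $q$, then use Sobolev--Poincar\'e to pass from the $L^{s'}$ oscillation of $b$ to an $L^r$ average of $\nabla b$ with $r<q$. One checks that the constraint $1/s+1/r\le 1+1/n$ coming from Sobolev--Poincar\'e is compatible with $1/s>1/p$ and $1/r>1/q$ (since $1/p+1/q=1<1+1/n$), so such $s,r$ exist. Then $(A\cdot B)^*\le C\,M_sA\cdot M_r(\nabla b)$ and both maximal functions are in the right Lebesgue spaces because $p/s>1$ and $q/r>1$. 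With this adjustment your outline is correct and matches the CLMS proof.
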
%
The following duality-like theorem was obtained first in \cite{FS72}:
\begin{theorem}[BMO-Hardy-duality]\label{th:bmoh}
There exists a constant $\constant[n]$ depending only on the dimension 
$n$, such that for every smooth $f \in BMO(\R^n)$ and $g \in \mathcal{H}(\R^n)$ the following inequality holds
\[
 \abs{\int_{\R^n} fg } \leq \constant[n]\ [f]_{BMO}\ \Vert g \Vert_{\mathcal {H}}.
\]
\end{theorem}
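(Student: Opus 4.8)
The plan is to reduce the estimate to the pairing of $f$ with a single atom, by invoking the atomic decomposition of the Hardy space. Call a function $a\in L^\infty(\R^n)$ an \emph{$\mathcal H$-atom} if it is supported in some ball $B$, satisfies the cancellation condition $\int_{\R^n}a=0$, and obeys the size bound $\Vert a\Vert_{L^\infty}\leq\abs{B}^{-1}$. The structural input I would invoke — this is the heart of the Fefferman--Stein theory of real Hardy spaces, see \cite{FS72} and also \cite{Stein93} — is that every $g\in\mathcal H(\R^n)$, with $\mathcal H$ as in Definition \ref{def:bmoh}, can be written as $g=\sum_{k=1}^\infty\lambda_k a_k$ where each $a_k$ is an $\mathcal H$-atom, $\lambda_k\in\R$, the series converges in $L^1(\R^n)$ (and in $\mathcal H$), and
\[
 \sum_{k=1}^\infty\abs{\lambda_k}\leq\constant[n]\ \Vert g\Vert_{\mathcal H}.
\]
In other words, the maximal-function norm of Definition \ref{def:bmoh} is comparable to the atomic norm $\inf\sum_k\abs{\lambda_k}$.

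The second step is the pairing with a single atom. Let $a$ be an $\mathcal H$-atom supported in a ball $B$. Since $f$ is smooth and $a\in L^\infty$ has compact support, $fa\in L^1$, and the cancellation $\int a=0$ allows us to subtract the mean of $f$ over $B$:
\[
 \abs{\int_{\R^n}fa}=\abs{\int_{B}\big(f-(f)_{B}\big)\,a}\leq\Vert a\Vert_{L^\infty}\int_{B}\abs{f-(f)_{B}}\leq\abs{B}^{-1}\cdot[f]_{BMO}\,\abs{B}=[f]_{BMO},
\]
using the very definition of $[f]_{BMO}$ applied to the ball $B$. Crucially, this bound is uniform over all atoms.

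The third step is summation. Write $g=\sum_k\lambda_k a_k$ as above. Since the partial sums converge to $g$ in $\mathcal H$, and the atom estimate of the previous step applied to the tails $\sum_{k>N}\lambda_k a_k$ shows that the numerical series $\sum_k\lambda_k\int_{\R^n} fa_k$ converges absolutely with limit $\int_{\R^n}fg$, we may interchange sum and integral to obtain
\[
 \abs{\int_{\R^n}fg}\leq\sum_{k=1}^\infty\abs{\lambda_k}\,\abs{\int_{\R^n}fa_k}\leq[f]_{BMO}\sum_{k=1}^\infty\abs{\lambda_k}\leq\constant[n]\ [f]_{BMO}\ \Vert g\Vert_{\mathcal H},
\]
which is the assertion. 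The restriction to smooth $f$ is used only to make sense of $\int fg$ a priori; for general $f\in BMO$ one first replaces $f$ by a truncated and mollified version — which can be chosen with uniformly bounded $BMO$-seminorm — applies the inequality, and passes to the limit.

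The main obstacle is entirely concentrated in the first step: the equivalence between the maximal-function definition of $\mathcal H$ used here and the atomic decomposition. Establishing it requires a Calderón--Zygmund decomposition of the level sets of the grand maximal function $f^*$, a careful construction of the atoms, and control of the resulting series; this is exactly the deep part of \cite{FS72}, so I would simply cite it. An alternative route, bypassing atoms, is Fefferman's original argument: extend $f$ and $g$ to $\R^{n+1}_+$ by Poisson integrals $u,v$, use that $t\,\abs{\nabla u(x,t)}^2\,dx\,dt$ is a Carleson measure of mass $\lesssim[f]_{BMO}^2$, represent $\int fg$ as a Littlewood--Paley integral $c_n\iint_{\R^{n+1}_+}\nabla u\cdot\nabla v\ t\,dx\,dt$, and close the estimate via Cauchy--Schwarz together with Carleson's lemma and the square-function characterization of $\mathcal H$ — trading the atomic decomposition for Carleson-measure and Littlewood--Paley theory.
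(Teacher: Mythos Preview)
The paper does not prove this theorem at all; it merely states it and cites Fefferman--Stein \cite{FS72} (and Stein's monograph \cite{Stein93}) as background material. So there is no ``paper's own proof'' to compare against.

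Your argument is the standard atomic route and is essentially correct. One point deserves a bit more care: in the summation step you write that ``the atom estimate of the previous step applied to the tails $\sum_{k>N}\lambda_k a_k$'' justifies the interchange, but a tail is not an atom. What you actually use is that $\abs{\int f a_k}\le[f]_{BMO}$ for each $k$, so $\sum_k\lambda_k\int f a_k$ converges absolutely; the remaining issue is to identify its limit with $\int_{\R^n}fg$. Since a smooth $BMO$ function need not be bounded, $L^1$-convergence of the atomic series alone does not immediately give $\int f g_N\to\int fg$. The clean way out is either to note that the atomic series converges in $\mathcal H$ and that $a\mapsto\int fa$ extends to a continuous linear functional on $\mathcal H$ (this is precisely what you are proving, so one must argue via the finite partial sums and the uniform atom bound), or---simplest---first establish the inequality for bounded smooth $f$, where the interchange is immediate, and then truncate a general smooth $f\in BMO$ at height $N$ (which keeps $[f]_{BMO}$ uniformly bounded) and let $N\to\infty$ using dominated convergence on the atom side. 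You already gesture at this in your closing remark, so the fix is minor.
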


\begin{theorem}[Wente's inequality]\label{th:wente} (c.f. \cite{Wente69}, \cite{Tartar85}, \cite{BrC84})\\
Let $a \in W^{1,2}(D^2)$, $b \in W^{1,p}(D^2)$ be given with some $p\in(1,\infty)$ and let $u \in W^{1,2}(D^2)$ be a weak solution of
\begin{equation}
\label{eq:wr:pdeu1}
\begin{cases}
  -\lap u = \nabla a \cdot \nabla^\bot b\quad &\mbox{in}\ D^2,\\[1ex]
   u = 0 \quad &\mbox{on}\ \partial D^2.
\end{cases}
\end{equation}
Then $u$ belongs to $W^{1,p}(D^2)$ and we have the inequality
\[
 \Vert \nabla u \Vert_{L^p(D^2)} \leq \constant[p] \Vert \nabla a \Vert_{L^2(D^2)}\ \Vert \nabla b \Vert_{L^p(D^2)}.
\]
\end{theorem}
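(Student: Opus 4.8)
The plan is to prove the estimate first for smooth data, where all the manipulations below are classical, and then recover the general case by approximation. For the reduction, choose $a_k \in C^\infty(\overline{D^2})$ with $a_k \to a$ in $W^{1,2}(D^2)$ and $b_k \in C^\infty(\overline{D^2})$ with $b_k \to b$ in $W^{1,p}(D^2)$, and let $u_k \in C^\infty(\overline{D^2})$ solve $-\lap u_k = \nabla a_k \cdot \nabla^\bot b_k$ with $u_k = 0$ on $\partial D^2$. Once the asserted inequality is known for the $u_k$, the sequence $\{u_k\}$ is bounded in $W^{1,p}_0(D^2)$, so some subsequence converges weakly there to a limit $u_*$; writing $\nabla a_k \cdot \nabla^\bot b_k = \mbox{div}(a_k \nabla^\bot b_k)$ and using $a_k \to a$ in every $L^t(D^2)$ together with $\nabla^\bot b_k \to \nabla^\bot b$ in $L^p(D^2)$, the right-hand sides converge in the sense of distributions, so $u_*$ is a weak solution; since the difference of two $W^{1,2}_0$-solutions is weakly harmonic with vanishing trace, hence zero, we get $u_* = u$, and lower semicontinuity of the norm transfers the estimate to $u$ and yields $u \in W^{1,p}(D^2)$. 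It therefore suffices to treat smooth $a, b$; since $\nabla a \cdot \nabla^\bot b$ and $u$ are unchanged when a constant is subtracted from $a$ or from $b$, we may also assume $(a)_{0,1} = (b)_{0,1} = 0$.

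For such smooth data $u \in C^\infty(\overline{D^2})$, and with $q := \tfrac{p}{p-1}$ I would represent the norm by duality,
\[
 \Vert \nabla u \Vert_{L^p(D^2)} = \sup \Big\{ \int_{D^2} \nabla u \cdot F \ :\ F \in C_0^\infty(D^2,\R^2),\ \Vert F \Vert_{L^q(D^2)} \le 1 \Big\}.
\]
Given such an $F$, I solve $\lap \phi = \mbox{div}(F)$ in $D^2$, $\phi = 0$ on $\partial D^2$, and put $G := F - \nabla \phi$; then $\mbox{div}(G) = 0$, $\phi \in W^{1,q}_0(D^2) \cap C^\infty(\overline{D^2})$, and $\Vert \nabla \phi \Vert_{L^q(D^2)} \le \constant[p] \Vert F \Vert_{L^q(D^2)} \le \constant[p]$ by $L^q$-theory for the Dirichlet Laplacian. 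Since $u = 0$ on $\partial D^2$ and $\mbox{div}(G) = 0$ we have $\int_{D^2} \nabla u \cdot G = 0$, so, using Green's identity, the equation for $u$, the identity $\nabla a \cdot \nabla^\bot b = \mbox{div}(a\nabla^\bot b)$ together with $\phi|_{\partial D^2} = 0$, and finally the pointwise identity $\nabla^\bot b \cdot \nabla \phi = -\nabla b \cdot \nabla^\bot \phi$, I obtain
\[
 \int_{D^2} \nabla u \cdot F = \int_{D^2} \nabla u \cdot \nabla \phi = \int_{D^2} (\nabla a \cdot \nabla^\bot b)\,\phi = -\int_{D^2} a\, \nabla^\bot b \cdot \nabla \phi = \int_{D^2} a\, (\nabla b \cdot \nabla^\bot \phi).
\]

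Now I extend $a$ to $\tilde a \in C_0^\infty(\R^2)$ with $\Vert \tilde a \Vert_{W^{1,2}(\R^2)} \le \constant\Vert a \Vert_{W^{1,2}(D^2)} \le \constant \Vert \nabla a \Vert_{L^2(D^2)}$ (the last step by the Poincar\'e inequality, as $(a)_{0,1} = 0$), extend $b$ to $\tilde b \in C_0^\infty(\R^2)$ with $\Vert \nabla \tilde b \Vert_{L^p(\R^2)} \le \constant[p]\Vert \nabla b \Vert_{L^p(D^2)}$ in the analogous way, and extend $\phi$ by $0$. Since $\nabla^\bot \phi$ is supported in $D^2$, the last integral equals $\int_{\R^2} \tilde a\,(\nabla \tilde b \cdot \nabla^\bot \phi)$. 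Here $\nabla \tilde b \in L^p(\R^2)$ is curl-free, $\nabla^\bot \phi \in L^q(\R^2)$ is divergence-free and $\tfrac1p + \tfrac1q = 1$, so Theorem \ref{th:CLMS} gives $\nabla \tilde b \cdot \nabla^\bot \phi \in \mathcal{H}$ with $\Vert \nabla \tilde b \cdot \nabla^\bot \phi \Vert_{\mathcal H} \le \Vert \nabla \tilde b \Vert_{L^p(\R^2)} \Vert \nabla \phi \Vert_{L^q(\R^2)} \le \constant[p] \Vert \nabla b \Vert_{L^p(D^2)}$; combining this with Theorem \ref{th:bmoh} and the bound $[\tilde a]_{BMO} \le \constant\Vert \nabla \tilde a \Vert_{L^2(\R^2)} \le \constant \Vert \nabla a \Vert_{L^2(D^2)}$ (Poincar\'e on balls) yields $\abs{\int_{\R^2} \tilde a\,(\nabla \tilde b \cdot \nabla^\bot \phi)} \le \constant[p]\,\Vert \nabla a \Vert_{L^2(D^2)}\,\Vert \nabla b \Vert_{L^p(D^2)}$. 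Taking the supremum over $F$ proves the inequality for smooth data, and the reduction step completes the proof.

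I expect the main obstacle to be the low regularity of the data: for $a \in W^{1,2}$ and $b \in W^{1,p}$ with $p \ne 2$ the Jacobian need not lie in $L^1$, and Green's identity, the duality pairings and the extensions used above are not literally justified — this is exactly why the argument is run for smooth data and then closed by approximation, the only nontrivial points there being the distributional convergence of the Jacobians and the uniqueness of the $W^{1,2}_0$-solution. The conceptual heart is that Theorem \ref{th:CLMS} applies only to a \emph{conjugate} pair of exponents, which $\nabla a \in L^2$ and $\nabla b \in L^p$ are not unless $p = 2$; testing against $\phi$ and integrating by parts is precisely the device that isolates the factor $a$ — which is controlled only in $BMO$, and there by $\Vert \nabla a \Vert_{L^2}$ — and restores the conjugate pairing $\nabla b \in L^p$ versus $\nabla \phi \in L^q$ inside the div-curl product.
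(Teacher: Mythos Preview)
Your proof is correct and follows essentially the same route as the paper's: reduce to smooth data, represent $\Vert\nabla u\Vert_{L^p}$ by duality, Hodge-decompose the test field to a gradient $\nabla\varphi$ with $\varphi\in W^{1,q}_0$, integrate by parts to restore a conjugate $L^p$--$L^q$ pairing inside the div-curl term, and conclude via Theorem~\ref{th:CLMS} together with the BMO--$\mathcal H$ duality of Theorem~\ref{th:bmoh}. The only cosmetic difference is that you isolate $a$ as the BMO factor and pair $\nabla\tilde b\in L^p$ with $\nabla^\bot\phi\in L^q$, whereas the paper isolates $b$ in BMO and pairs $\nabla a$ with $\nabla^\bot\varphi$; by the antisymmetry $\nabla a\cdot\nabla^\bot b=-\nabla b\cdot\nabla^\bot a$ the two computations are equivalent, and your version matches the hypotheses $a\in W^{1,2}$, $b\in W^{1,p}$ directly.
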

\begin{proof}
The theorem follows by compactness, if we can prove it for 
\[
 a \in C^\infty(\overline{D^2}),\quad \mvint_{D^2} a = 0,\quad b \in C^\infty(\overline{D^2}).
\]
Furthermore, we assume $a$ and $b$ to be extended to functions with compact support in $W^{1,2}(\R^2)$ and $W^{1,p}(\R^2)$, respectively. Let $q = \frac{p}{p-1}$ be the conjugated exponent of $p$. Writing $X = C_0^\infty(D^2,\R^2)$, we calculate
\[
 \Vert \nabla u \Vert_{L^p(D^2)} = \int_{D^n} \nabla u \frac{\abs{\nabla u}^{p-2} \nabla u}{\Vert \nabla u \Vert_{L^p(D^2)}^{p-1}} \leq \sup_{F \in X\atop \Vert F \Vert_{L^q(D^2)} \leq 1} \int_{D^2} \nabla u \cdot F.
\]
By linear Hodge decomposition, we can split any $F \in X$ into
\[
 F = \nabla \varphi + h,
\]
where $\varphi \in W^{1,2}_0(D^2)$ and $h \in L^2(D^2)$ satisfies
\[
 \int_{D^2} \nabla u \cdot h = 0.
\]
By $L^q$-Theory we have\footnote{\label{ft:lqtheory}Here we use $\Vert g \Vert_{W^{1,p}_0} \leq C \Vert \lap g \Vert_{(W^{1,p}_0)^*}$ which is true for $p \geq 2$ (see for example \cite{GiaquintMartinazziaRegularity}, Theorem 7.1) and which we derive for $p \in (1,2)$ by setting $\Vert g \Vert_{W^{1,p}_0} \leq C \sup_{F \in L^q\atop \Vert F \Vert_{L^q} \leq 1} \int \nabla g \cdot F$. Such $F$ can be decomposed in $\nabla \varphi$ for $\varphi \in W^{1,q}_0$ and some divergence free term, and by the estimates for $q > 2$ we have $\Vert \nabla \varphi \Vert_{L^q} \leq C \Vert F \Vert_{L^q}$.} 
\[
 \Vert \nabla \varphi \Vert_{L^q(D^2)} \leq \constant[q] \Vert F \Vert_{L^q(D^2)}.
\]
Hence, we arrive at
\[
 \Vert \nabla u \Vert_{L^p(D^2)} \leq \constant[q]\ \sup_{\varphi \in Y, \atop \Vert \nabla \varphi \Vert_{L^q(D^2)} \leq 1} \int_{D^2} \nabla u \cdot \nabla \varphi,
\]
where we abbreviated $Y = C_0^\infty(D^2)$. Applying the BMO-Hardy-Duality, Theorem \ref{th:bmoh}, to \eqref{eq:wr:pdeu1}, and then using the extension operator, H\"older- and Poincar\'{e} inequality, we obtain for any $\varphi \in Y$: 
\[
\begin{split}
 \int_{D^2} \nabla u\cdot \nabla \varphi &= \int_{D^2} \nabla a \cdot \nabla^\bot b\ \varphi\\
  &= -\int_{D^2} \nabla a \cdot \nabla^\bot \varphi\ b\\
  &= -\int_{\R^2} \nabla a \cdot \nabla^\bot \varphi\ (b-\mvint_{D^2} b)\\
  &\leq C \Vert \nabla (a-\mvint_{D^2} a) \cdot \nabla^\bot (\varphi-\mvint_{D^2} \varphi) \Vert_{\mathcal{H}}\ [b]_{BMO}\\
  &\leq C \Vert \nabla a \Vert_{L^p(D^2)}\ \Vert \nabla \varphi \Vert_{L^q(D^2)}\ \Vert \nabla b \Vert_{L^2(D^2)},
\end{split}
\]
which completes the proof.\qed
\end{proof}
It is clear, that this type of proof does extend to higher dimensions as well as to the case of homogeneous Neumann boundary data.

%%%%%%%%%%%%%%%%%%%%%%%%%%%%%%%%%%%%%%%%%%%%%%%%%%%%%%%%%%%%%%%%%%%%%%%%%%%%%%%%%%%%%%%%%%%%%%%%%%%%%%%%%%%%%%%%%%%%%%%%%%%%%%%%%%%%
\subsection{Decomposition of real skew-symmetric Matrices}
We sketch here the \emph{proof of Lemma \ref{la:Uhlenbeck}}. This result has been proved by Rivi\`ere in \cite{Riviere06}, adapting the techniques by Uhlenbeck, who proved a similar result in \cite{Uhlenbeck}. Lemma \ref{la:Uhlenbeck} follows by compactness from the following
\begin{lemma}
\label{la:uhlenbeckw22}
There are constants $\eps[m] > 0$ and $\constant[m] > 0$ such that the following holds: Let $\Omega \in W^{1,2}(D^2,so_m \otimes \R^2)$ be given with\begin{equation}
\label{eq:u2:o}
 \Vert \Omega \Vert_{L^2(D^2)} \leq \eps[m].
\end{equation}
Then there exist some $\xi \in W^{2,2}(D^2,so_m)$ with $\int_{D^2} \xi = 0$ and some $P \in W^{2,2}(D^2,SO_m)$ with $P-I \in W^{1,2}_0(D^2,\R^{m\times m})$, where $I$ denotes the identity matrix, such that 
\begin{equation}
\label{eq:u2:dec}
 \nabla^\bot \xi = P^{-1} \nabla P + P^{-1} \Omega P \mbox{\quad pointwise a.e.~in $\disk$.}
\end{equation}
In addition, we have the estimates
\begin{equation}
\label{eq:u2:estw12}
 \Vert \xi \Vert_{W^{1,2}(D^2,\R^{m\times m})} + \Vert P - I \Vert_{W^{1,2}(D^2,\R^{m\times m})} \leq \constant[m]\ \Vert \Omega \Vert_{L^2(D^2, \R^{m\times m \times 2})}
\end{equation}
and
\begin{equation}
\label{eq:u2:estw22}
 \Vert \xi \Vert_{W^{2,2}(D^2,\R^{m\times m})} + \Vert P - I \Vert_{W^{2,2}(D^2,\R^{m\times m})} \leq \constant[m]\ \Vert \Omega \Vert_{W^{1,2}(D^2,\R^{m\times m \times 2})}.
\end{equation} 
\end{lemma}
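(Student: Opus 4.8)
The plan is to set up a continuity method (Uhlenbeck's technique, following Rivi\`ere). First I would introduce, for $t \in [0,1]$, the scaled data $t\Omega \in W^{1,2}(D^2, so_m\otimes\R^2)$, noting that $\|t\Omega\|_{L^2} \le t\,\eps[m] \le \eps[m]$, and define the set
\[
 S := \{ t \in [0,1] : \text{the decomposition \eqref{eq:u2:dec} with estimates \eqref{eq:u2:estw12}, \eqref{eq:u2:estw22} exists for } t\Omega \}.
\]
Clearly $0 \in S$ (take $P = I$, $\xi = 0$), so $S \ne \emptyset$; the goal is to show $S$ is both open and closed in $[0,1]$, hence $S = [0,1]$, and then $1 \in S$ gives the claim. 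Closedness is the easy direction: if $t_k \to t$ with $t_k \in S$, the uniform bounds \eqref{eq:u2:estw12}, \eqref{eq:u2:estw22} let us extract weakly convergent subsequences $P_k \rightharpoonup P$ in $W^{2,2}$, $\xi_k \rightharpoonup \xi$ in $W^{2,2}$; the $W^{2,2} \hookrightarrow C^0$ (and compact $W^{2,2}\hookrightarrow W^{1,q}$ for all $q$) embeddings in two dimensions let us pass to the limit in the nonlinear identity \eqref{eq:u2:dec}, in the constraint $P \in SO_m$ a.e., and in the boundary conditions, while the norm bounds survive by weak lower semicontinuity.

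For openness, suppose $t_0 \in S$ with solution $(P_0,\xi_0)$, and perturb $t = t_0 + \sigma$ for small $\sigma$. I would linearize \eqref{eq:u2:dec}: writing $P = P_0 e^{A}$ with $A \in W^{2,2}(D^2, so_m)$ small and $A \in W^{1,2}_0$ on the boundary, and $\xi = \xi_0 + \beta$, equation \eqref{eq:u2:dec} becomes an equation for $(A,\beta)$ whose linear part at $0$ is an elliptic system (a Laplace-type system for $A$, coupled with a curl/div system for $\beta$) with invertible principal part on the appropriate function spaces $\{A \in W^{2,2}\cap W^{1,2}_0\} \times \{\beta \in W^{2,2}, \int\beta = 0\}$. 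The inhomogeneity depends smoothly (indeed, affinely) on $\sigma$. An application of the inverse function theorem / Banach fixed point argument in these spaces — using that the quadratic remainder terms are controlled by the $W^{2,2}\hookrightarrow C^0$ and $W^{1,2}\cdot W^{1,2}\hookrightarrow L^2$ (in 2D) multiplication estimates — then produces a solution for all $t$ near $t_0$, and the estimates \eqref{eq:u2:estw12}, \eqref{eq:u2:estw22} follow from the elliptic estimates with constants controlled uniformly provided $\eps[m]$ is chosen small enough at the outset.

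The main obstacle is the openness step, specifically verifying that the linearized operator is an isomorphism between the correct Sobolev spaces with a \emph{uniform} bound independent of $t$ and of the particular solution $(P_0,\xi_0)$ along the path — this is where the smallness of $\eps[m]$ must be exploited carefully, since for large $\|\Omega\|_{L^2}$ the linearization can degenerate (the gauge group acts and the decomposition need not be unique or even exist). Once Lemma \ref{la:uhlenbeckw22} is established, Lemma \ref{la:Uhlenbeck} follows by a density/compactness argument: approximate a general $\Omega \in L^2$ with $\|\Omega\|_{L^2} \le \eps[m]$ by smooth (hence $W^{1,2}$) fields $\Omega_k \to \Omega$ in $L^2$ with $\|\Omega_k\|_{L^2}\le\eps[m]$, apply Lemma \ref{la:uhlenbeckw22} to each, and use the $W^{1,2}$ bounds \eqref{eq:u2:estw12} together with the compact embedding $W^{1,2}\hookrightarrow L^q$ ($q<\infty$ in 2D) to pass to the limit in \eqref{eq:u2:dec}, obtaining $P \in W^{1,2}(B_r(a),SO_m)$, $\xi \in W^{1,2}(B_r(a),so_m)$ with the stated estimate; the scaling and translation invariance of the constants is clear by the change of variables $x \mapsto a + rx$.
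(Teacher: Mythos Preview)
Your proposal is correct and follows essentially the same continuity method as the paper. The only noteworthy difference is a technical one: rather than solving for the pair $(A,\beta)$ simultaneously as you suggest, the paper first takes the divergence of \eqref{eq:u2:dec} to eliminate $\xi$ entirely, applies the implicit function theorem to the resulting operator $T(U,\lambda)=\mathrm{div}\big(e^{-U}\nabla e^{U}+e^{-U}(\nabla^{\bot}\xi_{0}+\lambda)e^{U}\big)$ to solve for $U$ alone (with linearization $H(\psi)=\Delta\psi+\nabla\psi\cdot\nabla^{\bot}\xi_{0}-\nabla^{\bot}\psi\cdot\nabla\xi_{0}$), and only afterwards reconstructs $\xi$ from the Poincar\'e lemma; it also isolates the a~priori estimates \eqref{eq:u2:estw12}--\eqref{eq:u2:estw22} into a separate proposition (using Wente's inequality for the $W^{1,2}$ bound), which is exactly the mechanism that guarantees the uniform constants you flag as the main obstacle.
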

In order to proof this lemma, we introduce for yet to be chosen $\eps[m]$ and $\constant[m]$ the set
\[
 \mathcal {U} \equiv \mathcal{U}_{\eps[m],\constant[m]} = \left \{t \in [0,1]\ \left \vert\ \begin{array}{c}
\mbox{There is a decomposition of $t\Omega$ and}\\
\mbox{\eqref{eq:u2:dec}--\eqref{eq:u2:estw22} hold.}                                                                        
                                                                      \end{array}
\right. \right \}
\]
This set is clearly non-empty as $0 \in \mathcal {U}$ (using $\xi \equiv 0$ and $P \equiv I$). Furthermore it is closed, due to \eqref{eq:u2:estw22}. To prove openness we fix some $t_0 \in \mathcal {U}$, $t_0 < 1$. By definition of $\mathcal {U}$ we then find some $\zeta \equiv \xi_{t_0} \in W^{2,2}(D^2,so_m\otimes \R^2)$ and $R \equiv P_{t_0} \in W^{2,2}(D^2,SO_m)$ such that \eqref{eq:u2:dec}, \eqref{eq:u2:estw12}, \eqref{eq:u2:estw22} hold where $\xi$ and $P$ are replaced by $\zeta$ and $R$, respectively. We now prove the following
\begin{proposition}
Define the operator
\[
 T: W^{2,2}\cap W^{1,2}_0(D^2,so_m) \times W^{1,2}(D^2,so_m\otimes\R^2) \to L^2(D^2,so_m),
\]
\[
 T(U,\lambda) := div (e^{-U} \nabla e^U + e^{-U} (\nabla^\bot \zeta + \lambda) e^{U}).
\]
Then, there is a constant $\alpha > 0$ such that the following holds: If $\Vert \nabla \zeta \Vert_{L^2(D^2)} \leq \alpha$ is true, then there exists some $\gamma > 0$ such that for every $\lambda \in W^{1,2}(D^2,so_m\otimes \R^2)$ with $\Vert \lambda \Vert_{W^{1,2}(D^2,\R^{m\times m})} \leq \gamma$ we find some $U_\lambda \in W^{2,2} \cap W^{1,2}_0 (D^2,so_m)$ such that
\[
 T(U_\lambda,\lambda) = 0.
\]
Furthermore, $U_\lambda$ depends continuously on $\lambda$.
\end{proposition}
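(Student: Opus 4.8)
The plan is to apply the implicit function theorem to $T$ at the point $(U,\lambda)=(0,0)$, keeping $\zeta$ fixed. First I would verify that $T$ is a well-defined map of class $C^\infty$ between the indicated Banach spaces near the origin: in two dimensions $W^{2,2}(D^2)$ embeds into $C^0(\overline{D^2})$ and is a Banach algebra (in $\nabla^2(fg)$ the borderline term $\nabla f\cdot\nabla g$ lies in every $L^q$, $q<\infty$, since $W^{1,2}\hookrightarrow L^q$), so the exponential series $e^{U}=\sum_{k\ge 0}U^{k}/k!$ converges in $W^{2,2}$ and depends analytically on $U$; consequently $e^{-U}\nabla e^{U}$ and $e^{-U}(\nabla^\bot\zeta+\lambda)e^{U}$ lie in $W^{1,2}(D^2,\R^{m\times m}\otimes\R^2)$ — here I use $\nabla^\bot\zeta\in W^{1,2}$, which holds as $\zeta\in W^{2,2}$ — so that applying $\operatorname{div}$ lands in $L^2$. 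Differentiability of $T$ then follows from that of $U\mapsto e^{U}$ together with the boundedness of the bilinear multiplication maps and of the linear operators $\nabla,\operatorname{div}$. Expanding $e^{\pm tV}=I\pm tV+O(t^2)$ one finds $T(0,0)=\operatorname{div}(\nabla^\bot\zeta)=0$ (mixed partials of the $W^{2,2}$-map $\zeta$ commute) and
\[
 D_UT(0,0)[V]=\operatorname{div}\bigl(\nabla V+[\nabla^\bot\zeta,V]\bigr)=\lap V+\operatorname{div}\bigl([\nabla^\bot\zeta,V]\bigr),\qquad V\in W^{2,2}\cap W^{1,2}_0(D^2,so_m).
\]

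The decisive point is that $L:=D_UT(0,0)$ is an isomorphism onto $L^2(D^2,so_m)$ once $\alpha$ is small. The principal part $\lap\colon W^{2,2}\cap W^{1,2}_0(D^2,so_m)\to L^2(D^2,so_m)$ is an isomorphism by $L^2$-elliptic theory, so it suffices to handle $K[V]:=\operatorname{div}([\nabla^\bot\zeta,V])$ as a perturbation. Writing out components and using $\operatorname{div}\nabla^\bot=0$ gives $K[V]_{ac}=\sum_b\nabla^\bot\zeta_{ab}\cdot\nabla V_{bc}-\nabla^\bot\zeta_{bc}\cdot\nabla V_{ab}$, a finite sum of Jacobian-type products of scalar functions. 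Because $\zeta$ is fixed with $\nabla\zeta\in W^{1,2}\hookrightarrow L^q$ for all $q<\infty$, while $V\mapsto\nabla V$ is compact from $W^{2,2}$ into every $L^q$, $q<\infty$ (Rellich), the operator $K$ is compact from $W^{2,2}\cap W^{1,2}_0$ into $L^2$; hence $L=\lap+K$ is Fredholm of index $0$ and only injectivity remains. If $LV=0$ with $V\in W^{1,2}_0$, then $-\lap V=-\operatorname{div}([\nabla^\bot\zeta,V])$ has precisely the $\nabla a\cdot\nabla^\bot b$ structure of Wente's inequality (Theorem \ref{th:wente} with $p=2$, applied componentwise with $a=\zeta$, $b=V$), whence $\Vert\nabla V\Vert_{L^2}\le C_W\Vert\nabla\zeta\Vert_{L^2}\Vert\nabla V\Vert_{L^2}$. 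Taking $\alpha:=\tfrac1{2C_W}$ and $\Vert\nabla\zeta\Vert_{L^2}\le\alpha$ forces $\nabla V=0$, hence $V\equiv 0$; so $L$ is bijective and thus an isomorphism by the open mapping theorem.

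With $T$ of class $C^1$ near $(0,0)$, $T(0,0)=0$ and $D_UT(0,0)$ invertible, the implicit function theorem in Banach spaces provides $\gamma>0$ and a $C^1$ (in particular continuous) map $\lambda\mapsto U_\lambda$ from $\{\lambda\in W^{1,2}(D^2,so_m\otimes\R^2):\Vert\lambda\Vert_{W^{1,2}}\le\gamma\}$ into $W^{2,2}\cap W^{1,2}_0(D^2,so_m)$ with $U_0=0$ and $T(U_\lambda,\lambda)=0$, which is the claim. The main obstacle is the invertibility of $L$: one must recognise the Jacobian (div--curl) structure of the first-order term $\operatorname{div}([\nabla^\bot\zeta,V])$, so that the Wente estimate forces injectivity under a smallness bound on $\Vert\nabla\zeta\Vert_{L^2}$ \emph{alone} — no control of higher norms of $\zeta$ is needed — while compactness promotes injectivity to bijectivity. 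The remaining items (the Banach-algebra bookkeeping for the regularity of $T$, the computation of $D_UT(0,0)$, and the bare invocation of the implicit function theorem) are routine.
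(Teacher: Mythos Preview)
Your argument is correct and follows the same strategy as the paper: apply the implicit function theorem at $(0,0)$, compute the linearization as $\lap$ plus a Jacobian-structured lower-order term, use Wente's inequality and the smallness of $\Vert\nabla\zeta\Vert_{L^2}$ for injectivity, and a compactness/Fredholm argument for surjectivity. The only cosmetic differences are that the paper writes the perturbation as $\lap^{-1}$ of the lower-order term (working with $id-K$ on $W^{2,2}\cap W^{1,2}_0$ rather than with $\lap+K$ into $L^2$) and obtains compactness by approximating $\zeta$ with smooth functions instead of invoking Rellich directly.
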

\begin{proof}
First of all, we notice that $T$ is well defined and smooth, as the exponential function maps $W^{2,2}$ into $W^{2,2}$ smoothly. Furthermore, we have $T(0,0) = 0$. The proposition follows from the implicit function theorem, if we can prove that the linearization in the first component of $T$ at $(U,\lambda)=(0,0)$, namely
\[
 H(\psi) := \lap \psi + \nabla \psi \cdot \nabla^\bot \zeta - \nabla^\bot \psi \cdot \nabla \zeta,
\]
is an isomorphism
\[
 H : W^{2,2}\cap W^{1,2}_0(D^2,so_m) \to L^2(D^2,so_m).
\]
The injectivity follows for small $\delta > 0$ as in \cite{Uhlenbeck}: For $1 < p < 2$ we have
\[
 \Vert H(\psi) \Vert_{L^p} \geq \Vert \lap \psi \Vert_{L^p} - C\Vert \nabla \psi \Vert_{L^{p^\ast}}\ \Vert \nabla \zeta \Vert_{L^2} \geq c_0 \Vert \psi \Vert_{W^{2,p}} - \Vert \psi \Vert_{W^{2,p}} \alpha,
\]
where $p^\ast = \frac{2p}{2-p}$ is the Sobolev-exponent. (By Wente's inequality, Theorem \ref{th:wente}, this follows also for $p = 2$.)

Concerning the proof of surjectivity, we note that the operator $K: W^{2,2} \cap W^{1,2}_0(D^2,\R^{m\times m}) \to W^{2,2} \cap W^{1,2}_0(D^2,\R^{m\times m})$, defined by
\[
 \begin{cases}
 \Delta K(\psi) = \nabla \psi \cdot \nabla^\bot \zeta\quad &\mbox{in $D^2$,}\\
K(\psi) = 0 &\mbox{on $\partial D^2$},
 \end{cases}
\]
is linear, bounded and compact. The compactness is seen by approximating $\zeta$ with smooth functions. Since $K$ is injective we conclude by the Fredholm alternative that $id-K$ is an isomorphism. From this we get that $H$ is surjective.\qed
\end{proof}

We proceed with the proof of Lemma \ref{la:uhlenbeckw22}: For small $\lambda \in W^{1,2}(D^2,so_m\otimes \R^2)$ we define 
\[
 Q \equiv Q_\lambda := e^{U_\lambda} \in W^{2,2}(D^2,SO_m)
\]
and notice $Q-I \in W^{1,2}_0(D^2,\R^{m\times m})$. Let $P := RQ$. By the decomposition of $t_0 \Omega$ we then obtain
\[
 div (P^{-1} \nabla P - P^{-1} (t_0\Omega + R \lambda R^{-1}) P) = 0 \mbox{\quad in $D^2$.}
\]
Setting $\lambda := R^{-1} (t - t_0) \Omega R$, which is small in $W^{1,2}$ whenever $\abs{t-t_0}$ is small, we have
\[
 div (P^{-1} \nabla P - P^{-1} t\Omega P) = 0 \mbox{\quad in $D^2$.}
\]
Therefore, the Poincar\'{e} Lemma for differential forms yields a mapping $\xi \in W^{2,2}(D^2,\R^{m\times m})$ such that
\begin{equation}\label{eq:decomp:t}
 \nabla^\bot \xi = P^{-1} \nabla P - P^{-1} t\Omega P \mbox{\quad in $D^2$}
\end{equation}
is satisfied. In addition, we can assume $\xi$ to have zero mean value on $D^2$. Writing $P=P_t$ and $\xi=\xi_t$ for the just constructed solution of (\ref{eq:decomp:t}), we note that $\Vert P_{t} - R \Vert_{W^{2,2}}$ and $\Vert \xi_{t} - \zeta\Vert_{W^{2,2}}$ are small whenever $\abs{t-t_0}$ is small. Applying (\ref{eq:u2:estw12}) for $R$ and $\zeta$ we thus conclude for any $\delta>0$: For arbitrary $t\in[0,1]$ with sufficiently small $\abs{t-t_0}$ we may choose $\eps[m]>0$ small enough in dependence of $\constant[m]$ and $\delta$ to ensure
\[
 \Vert P-I \Vert_{W^{1,2}} + \Vert \xi \Vert_{W^{1,2}} \leq \delta
\]
for the solution $P=P_t$, $\xi=\xi_t$ of (\ref{eq:decomp:t}). The estimates \eqref{eq:u2:estw12} and \eqref{eq:u2:estw22} and hence the openness of $\mathcal U$ follow then from the subsequent Proposition \ref{pr:decest} and the lemma is proven. \qed

\begin{proposition}
\label{pr:decest}
There are constants $\delta(m) \in (0,1)$ and $\constant[m] > 0$ such that the following holds: Let $P \in W^{2,2}(D^2,SO_m)$, $P-I \in W^{1,2}_0(D^2,\R^{m\times m})$, $\xi \in W^{2,2}(D^2,\R^{m\times m})$, $\mvint_{D^2} \xi = 0$ and $\Omega \in W^{1,2}(D^2,\R^{m\times m})$ satisfy
\begin{equation}
\label{eq:decest:dec}
 \nabla^\bot \xi = P^{-1} \nabla P + P^{-1} \Omega P \mbox{\quad in $D^2$.}
\end{equation}
If moreover the estimate
\begin{equation}
\label{eq:decest:klein}
 \Vert \Omega \Vert_{L^2(D^2)} + \Vert P-I \Vert_{W^{1,2}(D^2)} + \Vert \xi \Vert_{W^{1,2}(D^2)} \leq \delta(m)
\end{equation}
is satisfied, then \eqref{eq:u2:estw12} and \eqref{eq:u2:estw22} hold as well.
\end{proposition}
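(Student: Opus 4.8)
The plan is to read off from \eqref{eq:decest:dec} two elliptic equations, one for $P$ and one for $\xi$, and then to \emph{close} the corresponding a priori estimates, the point being that the bad (critical) terms on the right-hand sides carry a Jacobian/null structure and have coefficients controlled by the small quantity in \eqref{eq:decest:klein}. Multiplying \eqref{eq:decest:dec} on the left by $P$ gives $\nabla P = P\nabla^\bot\xi-\Omega P$; taking the divergence and using $\mbox{div}(\nabla^\bot\xi)=0$ yields, since $P-I\in W^{1,2}_0(D^2)$,
\[
 -\lap(P-I) = -\nabla P\cdot\nabla^\bot\xi + \mbox{div}(\Omega P)\quad\mbox{in }D^2
\]
(equivalently $-\mbox{div}(P^{-1}\nabla P)=\mbox{div}(P^{-1}\Omega P)$). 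Moreover \eqref{eq:decest:dec} itself shows that $\nabla\xi$ is the pointwise $90^\circ$-rotation of $P^{-1}(\nabla P+\Omega P)$, so $|\nabla\xi|\le|\nabla P|+|\Omega|$ a.e. (as $P$ takes values in $SO_m$); hence every bound for $\xi$ will follow from the corresponding bound for $P$ together with the Poincar\'e--Wirtinger inequality (using $\mvint_{D^2}\xi=0$) and, for the second-order estimate, one differentiation of \eqref{eq:decest:dec}. No boundary value problem for $\xi$ is needed.

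To obtain \eqref{eq:u2:estw12} I would split $P-I=w_1+w_2$ in $W^{1,2}_0(D^2)$ with $-\lap w_1=-\nabla P\cdot\nabla^\bot\xi$ and $-\lap w_2=\mbox{div}(\Omega P)$; this is legitimate because the difference is harmonic with vanishing trace. Componentwise $-\lap w_1$ is a finite sum of scalar Jacobians $\nabla P_{il}\cdot\nabla^\bot\xi_{lk}$, so Wente's inequality (Theorem~\ref{th:wente} with $p=2$) gives $\|\nabla w_1\|_{L^2}\le C\|\nabla\xi\|_{L^2}\|\nabla P\|_{L^2}$, while the energy estimate plus $|\Omega P|\le|\Omega|$ gives $\|\nabla w_2\|_{L^2}\le\|\Omega\|_{L^2}$. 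Since $\|\nabla\xi\|_{L^2}\le\|\xi\|_{W^{1,2}}\le\delta(m)$ by \eqref{eq:decest:klein} and $\|\nabla P\|_{L^2}=\|\nabla(P-I)\|_{L^2}$, choosing $\delta(m)$ so small that $C\delta(m)\le\tfrac12$ lets me absorb and get $\|\nabla(P-I)\|_{L^2}\le 2\|\Omega\|_{L^2}$, hence $\|\nabla\xi\|_{L^2}\le 3\|\Omega\|_{L^2}$; Poincar\'e's inequalities then give \eqref{eq:u2:estw12}.

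For \eqref{eq:u2:estw22} I would work with the non-divergence form $\lap(P-I)=\nabla P\cdot\nabla^\bot\xi-(\mbox{div}\,\Omega)P-\Omega\cdot\nabla P$ and estimate $\|\lap(P-I)\|_{L^2}$; here the hypothesis $P,\xi\in W^{2,2}(D^2)$ is used so that all norms below are a priori finite and the absorption is rigorous. One has $\|\nabla P\cdot\nabla^\bot\xi\|_{L^2}+\|\Omega\cdot\nabla P\|_{L^2}\le C\big(\|\nabla P\|_{L^4}\|\nabla\xi\|_{L^4}+\|\Omega\|_{L^4}\|\nabla P\|_{L^4}\big)$ and $\|(\mbox{div}\,\Omega)P\|_{L^2}\le C\|\Omega\|_{W^{1,2}}$. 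The two-dimensional Ladyzhenskaya/Gagliardo--Nirenberg inequality $\|f\|_{L^4}^2\le C\|f\|_{L^2}\|f\|_{W^{1,2}}$, combined with $\|\nabla P\|_{L^2}+\|\nabla\xi\|_{L^2}\le C\|\Omega\|_{L^2}\le C\delta(m)$ from \eqref{eq:u2:estw12} and $\|\Omega\|_{L^2}\le\delta(m)$, turns the first two terms into $C\delta(m)\big(\|\nabla^2P\|_{L^2}+\|\nabla^2\xi\|_{L^2}\big)+C\|\Omega\|_{W^{1,2}}$. Differentiating \eqref{eq:decest:dec} once and using the same $L^4$/GN bounds gives $\|\nabla^2\xi\|_{L^2}\le C\|\nabla^2 P\|_{L^2}+C\|\Omega\|_{W^{1,2}}$, so $L^2$-elliptic regularity for $P-I\in W^{1,2}_0$ yields $\|\nabla^2P\|_{L^2}\le\delta(m)(C+C^2)\|\nabla^2P\|_{L^2}+C\|\Omega\|_{W^{1,2}}$; shrinking $\delta(m)$ once more (still depending only on $m$, since all the $C$ are $C(m)$) and absorbing gives $\|\nabla^2P\|_{L^2}+\|\nabla^2\xi\|_{L^2}\le C\|\Omega\|_{W^{1,2}}$, which together with \eqref{eq:u2:estw12} is \eqref{eq:u2:estw22}.

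I expect the main obstacle to be the closing of the estimates rather than their derivation. The right-hand sides of the $P$- and $\xi$-equations are critical (a product of two $L^2$-gradients), so a naive $L^2$-argument only produces an a priori finite bound, not one linear in $\|\Omega\|$. Two mechanisms must be combined, in the right order: (i) the null/Jacobian structure, so that Wente's inequality (Theorem~\ref{th:wente}) handles the bilinear terms with no loss of derivative or integrability; and (ii) the smallness in \eqref{eq:decest:klein}, which supplies the small factor $\|\nabla\xi\|_{L^2}\lesssim\delta$ (respectively $\|\Omega\|_{L^2}\le\delta$, after Gagliardo--Nirenberg interpolation) needed to absorb the critical contribution on the left. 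A secondary nuisance is the zero-order term $\Omega\cdot\nabla P$, which is only borderline $L^2$; this is precisely why the two-dimensional $L^4$-interpolation together with the already established $W^{1,2}$-bound must be invoked before the $W^{2,2}$-estimate closes.
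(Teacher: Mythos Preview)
Your proposal is correct and follows essentially the same route as the paper: derive the elliptic equation $\lap(P-I)=\nabla(P-I)\cdot\nabla^\bot\xi-\mbox{div}(\Omega P)$, use Wente's inequality on the Jacobian term together with the smallness \eqref{eq:decest:klein} to close the $W^{1,2}$-estimate, and then bootstrap to $W^{2,2}$ by bounding the $L^2$-norm of the right-hand side and absorbing the small critical contributions. The only cosmetic difference is in the treatment of the product terms at the $W^{2,2}$-level: the paper uses the embedding $W^{1,1}(D^2)\hookrightarrow L^2(D^2)$ to estimate $\Vert\Omega\cdot\nabla P\Vert_{L^2}$ and $\Vert\nabla P\cdot\nabla^\bot\xi\Vert_{L^2}$ directly via the product rule, whereas you use H\"older together with the Ladyzhenskaya interpolation $\Vert f\Vert_{L^4}^2\le C\Vert f\Vert_{L^2}\Vert f\Vert_{W^{1,2}}$; both produce the same factor $\delta$ in front of the second-order norms and lead to the same absorption.
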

\begin{proof}
Multiplying \eqref{eq:decest:dec} by $P$ yields
\begin{equation}
\label{eq:decest:pdep}
 \lap (P-I) = \nabla (P-I) \cdot \nabla^\bot \xi - div (\Omega\ P)  \mbox{\quad in $D^2$.}
\end{equation}
Wente's inequality, Theorem \ref{th:wente}, and $L^2$-theory implies
\[
 \Vert \nabla (P-I) \Vert_{L^2} \leq C \big(\Vert \nabla (P-I) \Vert_{L^2}\ \Vert \nabla \xi \Vert_{L^2} + \Vert \Omega \Vert_{L^2}\big).
\]
Here we used crucially that $\Vert P^{-1} \Vert_{L^\infty} + \Vert P \Vert_{L^\infty}$ is a-priori bounded by a constant, since $P \in SO_m$ holds pointwise a.e.~in $D^2$. The estimate \eqref{eq:u2:estw12} then follows by taking the $L^2$-norm in \eqref{eq:decest:dec} and choosing $\delta(m)$ sufficiently small.

For the proof of \eqref{eq:u2:estw22} we start with the obvious inequality
\[
 \Vert div(\Omega P) \Vert_{L^2} \leq C (\Vert \Omega \Vert_{W^{1,2}}+ \Vert \Omega\cdot \nabla P \Vert_{L^2}).
\]
Using the imbedding $W^{1,\frac{n}{2}} \hookrightarrow L^n$ we get in two dimensions:
\[
 \Vert \Omega\cdot \nabla P \Vert_{L^2} \leq C \ \Vert \Omega\cdot \nabla P \Vert_{W^{1,1}} \leq C \  (\Vert \Omega \Vert_{W^{1,2}}\ \Vert \nabla P \Vert_{L^2} + \Vert \Omega \Vert_{L^2}\ \Vert \nabla^2 P \Vert_{L^2}).
\]
Furthermore, the above mentioned imbedding implies
\[
 \Vert \nabla P \cdot \nabla^\bot \xi \Vert_{L^2} \leq C \Vert \nabla P\Vert_{L^2}\ \Vert \nabla \xi \Vert_{L^2} + \Vert \nabla^2 P \Vert_{L^2}\ \Vert \nabla \xi \Vert_{L^2} + \Vert \nabla P \Vert_{L^2}\ \Vert \nabla^2 \xi \Vert_{L^2}.
\]
Using \eqref{eq:decest:klein}, \eqref{eq:u2:estw12} and $\delta=\delta(m) < 1$, we infer
\[
 \Vert \nabla P\cdot \nabla^\bot \xi \Vert_{L^2} \leq C \big(\Vert \Omega \Vert_{W^{1,2}} + \delta \Vert \nabla^2 P \Vert_{L^2} + \delta \Vert \nabla^2 \xi \Vert_{L^2}\big).
\]
Thus by $L^2$-theory and \eqref{eq:decest:pdep} we have
\begin{equation}
\label{eq:decest:pw22}
 \Vert P-I\Vert_{W^{2,2}} \leq C \delta (\Vert \nabla^2 P \Vert_{L^2} + \Vert \nabla^2 \xi \Vert_{L^2}) + C \Vert \Omega \Vert_{W^{1,2}}.
\end{equation}
Starting from \eqref{eq:decest:dec}, we obtain by the same techniques:
\[
 \Vert \nabla \xi\Vert_{W^{1,2}} \leq C (\Vert \Omega \Vert_{W^{1,2}} + \Vert \nabla^2 P \Vert_{L^2}).
\]
Choosing $\delta > 0$ small enough and employing \eqref{eq:decest:pw22}, we finally arrive at \eqref{eq:u2:estw22}. \qed
\end{proof}

\begin{remark}
A similar result holds as well for $\Omega \in W^{2,n}(D^n,so_m \otimes \R^n)$.
\end{remark}

%%%%%%%%%%%%%%%%%%%%%%%%%%%%%%%%%%%%%%%%%%%%%%%%%%%%%%%%%%%%%%%%%%%%%%%%%%%%%%%%%%%%%%%%%%%%%%%%%%%%%%%%%%%%%%
\subsection{Dirichlet growth theorem}\label{dirichlet}
In this section we sketch the \emph{proof of Proposition \ref{pr:dirichlet}}. It is very similar to the one of Theorem 3.5.2 in Morrey's monograph \cite{Morrey}.

Recall the presupposed relation 
\[\tag{\ref{eq:bound:dirgrowth}}
 \mathcal{M}_p(x_0,r;u) \leq \constant[s,p,\gamma] \Big[\mathcal{M}_p(x_0,R;u) + \Vert e \Vert_{L^s(B_R(x_0))}^p\Big] \left (\frac{r}{R} \right )^{\mu}
\]
for all $x_0\in D^2$ and $0 < r < R < R_0$ with $B_R(x_0) \subset D^2$. Moreover, H\"older's inequality implies
\begin{equation}\label{eq:dgt:Mpu}
  \mathcal{M}_p(x_0,R;u) \leq C \Vert  \nabla u \Vert_{L^2(B_{R}(x_0))}^p.
\end{equation}
We fix some $a\in D^2$ and $\varrho \in (0,\frac{R_0}2)$ with $B_{2\varrho}(a) \subset D^2$ and pick arbitrary $x, y \in B_{\varrho}(a)$. For any $\eta \in B_{\varrho}(a)$ we have
\[
 \abs{u(x)- u(y)} \leq \abs{u(x)-u(\eta)} + \abs{u(y) - u(\eta)},
\]
and hence
\[
 \abs{u(x)- u(y)} = \mvint_{B_{\varrho}(a)}\!\abs{u(x)- u(y)}\,d\eta \leq \mvint_{B_{\varrho}(a)}\!\big(\abs{u(x)-u(\eta)} + \abs{u(y) - u(\eta)}\big)\,d\eta.
\]
Defining the point $x_t := x + t(a-x)$ for $t \in (0,1)$,  we now calculate
\begin{eqnarray*}
&&  \hspace*{-9ex}\frac{1}{\varrho^2}\int_{B_{\varrho}(a)} \abs{u(x)-u(\eta)}\,d\eta\\
&\qquad &\leq \ \int_{B_{\varrho}(a)} \left(\int_{0}^1 \frac{1}{\varrho} \abs{\nabla u(x+t(\eta - x))}\ dt\right)\,d\eta\\
&& \leq \ C \frac{1}{\varrho}\ \int_0^1 \left (\int_{B_{\varrho}(a)} \abs{\nabla u(x+t(\eta - x))}^p \,d\eta\right )^{\frac{1}{p}}\ \varrho^{2(1-\frac{1}{p})}\,dt\\
&& \leq \ C \int_0^1\varrho^{1 - \frac{2}{p}}\ t^{-\frac{2}{p}} \left (\int_{B_{t\varrho} (x+t(a-x))} \abs{\nabla u}^p \right )^{\frac{1}{p}}\,dt\\ 
&& = \ C \int_{0}^1\varrho^{1 - \frac{2}{p}}\ t^{-\frac{2}{p}} \left ((t\varrho)^{2-p}\ J_p(x_t,t\varrho; u) \right )^{\frac{1}{p}}\,dt\\
&& = \ C \int_{0}^1 t^{-1} \left (J_p(x_t,t\varrho; u) \right )^{\frac{1}{p}}\,dt\\
&& \hspace*{-1.5ex}\overset{\eqref{eq:bound:dirgrowth}\atop \eqref{eq:dgt:Mpu}}{\leq} C \int_{0}^1 t^{-1} \left ( \left (\frac{t\varrho}{\varrho} \right )^{\mu} (C \Vert \nabla u \Vert_{L^2(B_{\varrho}(x_t))}^p + \Vert e \Vert_{L^s(B_\varrho(x_t))}^p \right )^{\frac{1}{p}}\,dt\\[1ex]
&&\leq \ C \int_{0}^1 t^{-1 + \frac{\mu}{p}}\ \left ( \Vert \nabla u \Vert_{L^2(B_{2\varrho}(a))} + \Vert e \Vert_{L^s(B_{2\varrho}(a)}) \right )\,dt\\[1ex]
&& =  \ C \frac p\mu \left ( \Vert \nabla u \Vert_{L^2(B_{2\varrho}(a))} + \Vert e \Vert_{L^s(B_{2\varrho}(a))}  \right ). 
\end{eqnarray*}
This gives
\[
 \abs{u(x)-u(y)} \leq C \frac p\mu \left (\Vert \nabla u \Vert_{L^2(B_{2\varrho}(a))} + \Vert e \Vert_{L^s(B_{2\varrho}(a) )} \right )^{\frac{1}{p}} \mbox{\quad for all $x,y \in B_{\varrho}(a)$}, 
\]
and the proposition follows by replacing $\varrho$ by $\frac\varrho2$.\qed

%%%%%%%%%%%%%%%%%%%%%%%%%%%%%%%%%%%%%%%%%%%%%%%%%%%%%%%%%%%%%%%%%%%%%%%%%%%%%%%%%%%%%%%%%%%%%%%%%%%%%%%%%%%%%%%
\subsection{Continuity on the boundary}\label{strzcont}
We conclude with recalling the \emph{proof of Lemma \ref{la:bound:strzlecki}}, which widely agrees with the proof of Lemma 3.1 in \cite{Strzelecki03}; see also \cite{HildKaul72}, Lemma 3.

For $\varrho \in [0,1]$, $\theta \in [0,2\pi)$ let 
\[
v(\varrho,\theta) := u(\varrho\cos \theta,\varrho\sin \theta).
\]
We denote the continuous representation of the trace $u \big \vert_{\partial D^2}$ with $\psi$. Let us fix $y_0 = (\cos\theta_0,\sin \theta_0) \in \partial D^2$ and let $x_1 = \varrho_1 e^{i\theta_1}$ be an interior point of $D^2$. We pick some $x\strich =\varrho_1 e^{i\theta\strich}\in B_{\frac{\delta}{2}}(x_1)$, where $\theta\strich$ will be chosen later and $\delta := 1 - \varrho_1$. Setting
	$$y\strich:=\frac{x\strich}{\abs{x\strich}} = (\cos \theta\strich,\sin \theta \strich) \in \partial D^2,$$ 
we then have
\[
 \abs{u(x_1) - \psi(y_0)} \leq \abs{u(x_1) - u(x\strich)} + \abs{u(x\strich) - \psi(y\strich)} + \abs{\psi(y\strich) - \psi(y_0)}.
\]
For small $\delta$ and small $\abs{\theta_0 - \theta_1}$ the third term becomes small. Assumption \eqref{eq:strz:osc} implies that, for small $\delta=1-\abs{x_1}$, the first term becomes small as well. Hence, we only have to check the smallness of
\[
 \abs{u(x\strich) - \psi(y\strich)}
\]
for small $\delta$, where $x\strich =\varrho_1 e^{i\theta\strich}\in B_{\frac{\delta}{2}}(x_1)$ is yet to be chosen. This can be done exactly as in \cite{Strzelecki03}: For any $\sigma \in (0,2\pi)$ we have
\[
 \int_{\theta}^{\theta+\sigma} \int_{1-\delta}^1 \abs{v_r(r,\vartheta)}^2\ r\ dr\ d\vartheta \leq \int_{1-\delta \leq \abs{x} \leq 1} \abs{\nabla u}^2 =: I(\delta),
\]
which implies
\[
 \int_{\theta}^{\theta + \sigma} \int_{1-\delta}^1 \abs{v_r (r,\vartheta)}^2\ \frac{r}{r}\ dr\ d\vartheta \leq \frac{I(\delta)}{1-\delta}.
\]
By a contradiction argument we obtain the existence of a set $E_\sigma \subset (\theta,\theta + \sigma)$ with positive one-dimensional Lebesgue measure and such that
\begin{equation}
\label{eq:strz:3.2}
 \int_{1-\delta}^1 \abs{v_r(r,\tilde{\theta})}^2 \,dr\leq \frac{I(\delta)}{\sigma(1-\delta)} \quad \mbox{for all $\tilde{\theta} \in E_\sigma$}
\end{equation}
is true. By approximation we can assume that, additionally,
\[
 \abs{\psi(\cos \tilde{\theta}, \sin \tilde{\theta}) - v(\varrho_1,\tilde{\theta})} = \abs{v(1,\tilde{\theta}) - v(\varrho_1,\tilde{\theta})}  \leq \int_{\varrho_1}^1 \abs{v_r(r,\tilde{\theta})}\ dr
\]
holds for all $\tilde{\theta} \in E_\sigma$. Hence, we can estimate
\[
 \abs{\psi(\cos \tilde{\theta}, \sin \tilde{\theta}) - v(\varrho_1,\tilde{\theta})} \leq (1-\varrho_1)^{\frac{1}{2}}\! \left ( \int_{\varrho_1}^1 \abs{v_r(r,\tilde{\theta})}^2dr\!\right )^{\frac{1}{2}} \overset{\eqref{eq:strz:3.2}}{\leq} \left (\frac{\delta}{\sigma} \right )^{\frac{1}{2}}\!\frac{\sqrt{I(\delta)}}{(1-\delta)^{\frac{1}{2}}}.
\]
By setting $\sigma := \frac{\delta}{4}$ and choosing $\theta\strich\in E_{\frac{\delta}{4}}$ we arrive at
\[
 \abs{u(x\strich) - u(y\strich)} \leq 2 \left (\frac{I(\delta)}{1-\delta}\right )^{\frac{1}{2}} \xrightarrow{\delta \to 0} 0,
\]
and Lemma \ref{la:bound:strzlecki} is proven. \qed

%\newpage
%\bibliographystyle{wdg_plain}%
\bibliographystyle{alpha}%
\bibliography{bib}%

\end{document}